\newtheorem{theorem}{Theorem}[section]
\newtheorem{lemma}{Lemma}[section]
\newtheorem{proposition}{Proposition}[section]
\theoremstyle{definition}
\newtheorem{definition}{Definition}[section]
\theoremstyle{remark}
\newtheorem{remark}{Remark}[section]
\numberwithin{equation}{section}
\newlength{\myfboxsep}
\newlength{\mywidth}
\DeclareMathOperator{\E}{E}
\DeclareMathOperator{\Tr}{Tr}
\newcommand{\comment}[1]{}
\begin{document}
\title[Triangular matrices]{Spectral properties of random Triangular matrices}
\author[R. Basu]{Riddhipratim Basu}
\thanks{R. Basu's research is supported by Lo\`eve Fellowship, Department of Statistics, University of California, Berkeley.}
\address{Department of Statistics
 University of California, Berkeley}
\email{riddhipratim@stat.berkeley.edu}
\author[A.Bose]{Arup Bose}
\thanks{ A. Bose's research
supported by J.C.Bose National Fellowship, Dept. of Science and Technology, Govt. of India.}
\address{Statistics and Mathematics Unit Indian
Statistical Institute 203 B.T.~Road Kolkata 700108 India}
\email{bosearu@gmail.com}
\author[S. Ganguly]{Shirshendu Ganguly}
\address{Department of Mathematics, University of Washington,
Seattle}
\email{sganguly@math.washington.edu}
\author[R. S. Hazra]{Rajat Subhra Hazra}
\address{Institut f\"ur Mathematik\\
Universit\"at  Z\"urich\\
Winterthurerstrasse 190\\
CH-8057, Z\"urich
} \email{rajatmaths@gmail.com}

\keywords{Triangular matrices, Wigner, Hankel, Toeplitz and
Symmetric Circulant matrices, limiting spectral distribution,
asymptotically free, $DT$-operators, Catalan words, symmetric words,
semicircular law, joint convergence of matrices.}
\subjclass[2000]{Primary 60B20; Secondary 46L54.}
\date{5th October, 2011. First revision January 26, 2012. Second revision April 06, 2012.}
\begin{abstract}
We prove the existence of the limiting spectral distribution (LSD) of symmetric triangular
patterned matrices and also establish the joint convergence of sequences of such matrices. For the particular case of the symmetric triangular Wigner matrix, we derive expression for the moments of the LSD
using properties of Catalan words. The problem of deriving explicit formulae for the moments  of the LSD
does not seem to be easy to solve for other patterned matrices.
The LSD of the non-symmetric triangular Wigner matrix also does not seem to be easy to establish.
\end{abstract}

\maketitle
\section{\bf Introduction}


Two of the most important problems in operator theory are the
\textit{invariant subspace problem} and the \textit{hyper invariant
subspace problem}. Let $\mathcal H$ be a separable Hilbert space
(infinite dimensional) and let $\mathcal B(\mathcal H)$ be the
algebra of bounded operators on $\mathcal H$. An {\it invariant
subspace} of $\mathbb A\in\mathcal B(\mathcal H)$ is a subspace
$\mathcal H_0\subset \mathcal H$ such that $\mathbb A(\mathcal
H_0)\subset \mathcal H_0$ and a {\it hyper invariant subspace} of $\mathbb
A$ is a subspace $\mathcal H_0\subset \mathcal H$ that is invariant
for every operator $\mathbb B\in\mathcal B(\mathcal H)$ that
commutes with $\mathbb A$. The invariant subspace problem asks
whether every operator in $\mathcal B(\mathcal H)$ has a closed
non-trivial invariant subspace. The hyper invariant subspace
conjecture states that every operator in $\mathcal B(\mathcal H)$
that is not a scalar multiple of the identity operator has a closed,
non-trivial hyper invariant subspace. For many years there were
attempts to prove this result. Recently there have been
attempts to
disprove this conjecture.

A natural candidate which
stood out for counter examples was the $DT$-operators defined
by~\citet{dykema:haagerup:2004}. This is related to the following
\textit{asymmetric} matrix $T_n$ defined and studied in their work.   It
is a triangular version of the celebrated Wigner matrix
(which  is a symmetric matrix and, in its simplest form, has i.i.d.
entries).

\begin{equation}\label{def: T_n}
  T_{n}\  = \left[ \begin{array} {cccccc}
            t_{1,1} & t_{1,2} & t_{1,3} & \ldots & t_{1,n-1} & t_{1,n} \\
            0 & t_{2,2} & t_{2,3} & \ldots & t_{2,n-1} & t_{2,n} \\
            0 & 0 & t_{3,3} & \ldots & t_{3,n-1} & t_{3,n} \\
                  &       &       & \vdots &       &   \\
            0 & 0 & 0 & \ldots & 0 & t_{n,n}
            \end{array} \right]
\end{equation}
 where $(t_{i,j})_{1\leq i\leq j\leq n}$ are i.i.d.\ complex Gaussian random variables having mean $0$ and variance $1/n$.

Let $\mu$ be a compactly supported measure on the complex plane
$\mathbb C$ and let $D_n$ be a diagonal matrix with $\mu$-distributed
independent random variables which are  also independent of $T_n$. Let $\tau_n$ denote the functional
$\frac1n \Tr$. \citet{dykema:haagerup:2004}
showed that $Z_n=c
n^{-1/2}T_n+D_n$ converges in $*$-moments, that is,
\begin{equation}\label{dt-convergence}
\lim_{n\to\infty}\tau_n\left(Z_n^{\epsilon(1)}Z_n^{\epsilon(2)}\ldots
Z_n^{\epsilon(k)}\right)\end{equation} exists for every $k\in\mathbb
N$ and for all $\epsilon(1),\epsilon(2),\ldots,
\epsilon(k)\in\{*,1\}$.
Further, the limits $DT(\mu, c)$ in~\eqref{dt-convergence} are
operators and may be viewed as
elements of a $W^*$ non-commutative probability space
$(\mathcal M,\tau)$ where $\mathcal M$ is a von Neumann algebra and
$\tau$ is a normal state. They also showed that these $DT$-operators are
decomposable (an operator $K$  is decomposable if for every cover
$\mathbb C=U\cup V$ of the complex plane of open subsets $U$ and
$V$ there are $K$-invariant closed subspaces $\mathcal H'$ and
$\mathcal H''$ such that $\sigma(K|_{\mathcal H'})\subset U$ and
$\sigma(K|_{\mathcal H''})\subset V$ and $\mathcal H=\mathcal
H'+\mathcal H''$). As a consequence, it was shown in \citet{dykema:haagerup:2004,dykema:haagerup:2004a}
that $DT$-operators whose
spectra contains more than one point have non-trivial closed hyper
invariant subspaces.
Thus the study of the spectrum of the
operator $DT$
is of immense importance.

Some
properties of the $*$-limit of $n^{-1/2}T_n$ were derived by
\citet{dykema:haagerup:2004, dykema:haagerup:2004a},
\citet{sniady:2003} and
\citet{shlyakhtenko:1996}. In particular,
considerable analytical techniques as well as
combinatorics of operator valued free
probability theory were used to show that
\begin{equation}\label{eq:triangular moments}
{\rm E}\frac{1}{n} {\rm Tr}\left(\frac{T_n^* T_n}{n}\right)^k
\rightarrow \frac{k^k}{(k+1)!}.
\end{equation}
Moreover, the sequence $\{\frac{k^k}{(k+1)!}\}$  are moments of a
probability measure $\nu$ supported on $[0,e]$ and  given by
\begin{equation}
d\nu(x)=\psi(x)dx \quad \text{where } \psi:(0,e)\to \mathbb R^+
\end{equation}
 is the unique solution of
\begin{equation}\label{eq:densityequation}
\psi\left(\frac{\sin v}{v}\exp(v\cot v)\right)=\frac1\pi \sin v \exp(-v\cot v).
\end{equation}
The plot of the above function is given in figure~\ref{fig:psi}. 
\begin{figure}[htp]
\centering
\includegraphics[height=100mm, width =100mm ]{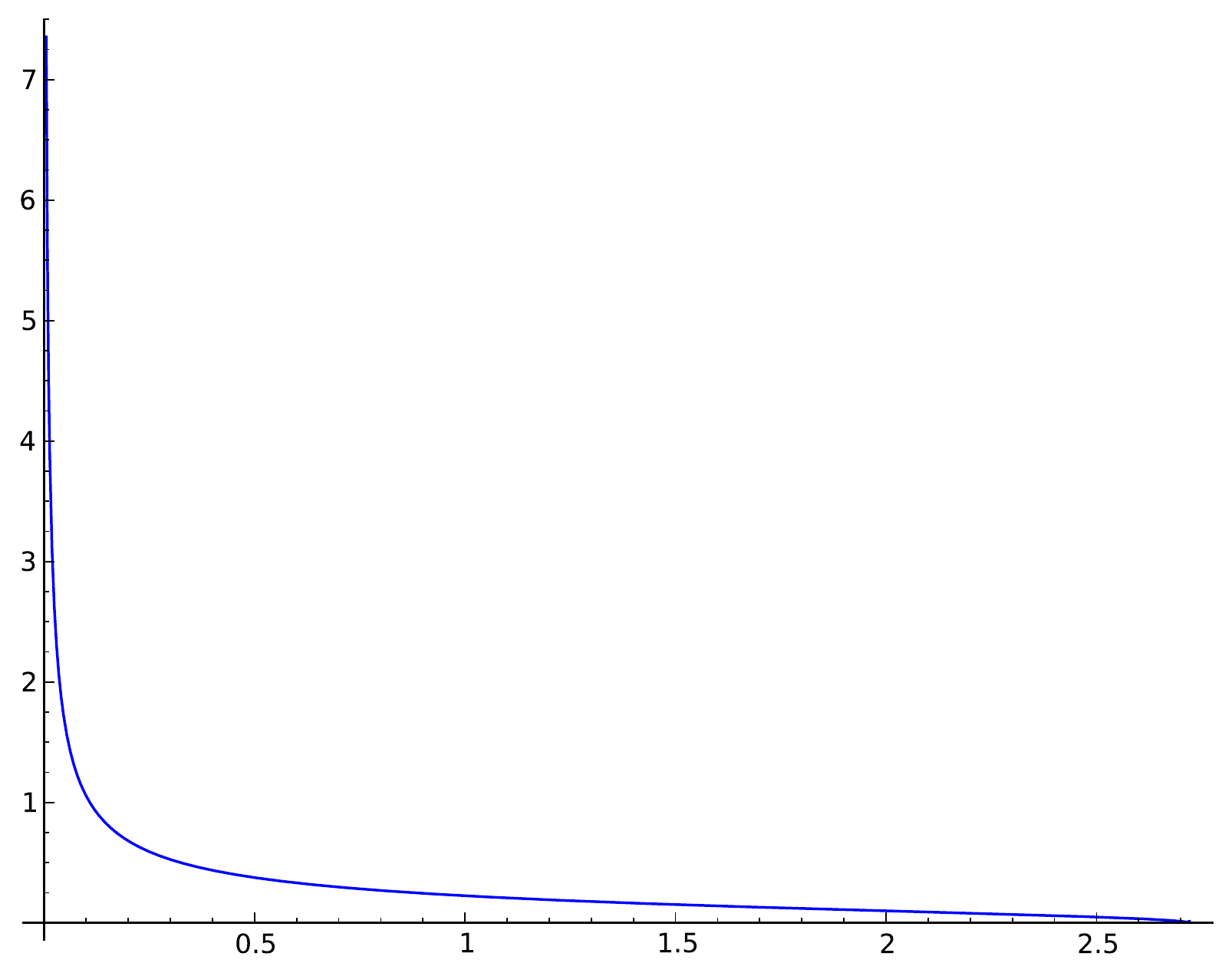}
\caption{\footnotesize Plot of function $\psi$}
\label{fig:psi}
\end{figure}
\vskip20pt
Now consider
the triangular symmetric
Wigner matrix:
\begin{equation}\label{def:wigner7}
        W_n^u =
            \left[ \begin{array} {cccccc}
                x_{11} & x_{12} & x_{13} & \ldots & x_{1(n-1)} & x_{1n} \\
                x_{12} & x_{22} & x_{23} & \ldots & x_{2(n-1)} & 0 \\
                      &       &       & \vdots &       &   \\
                x_{1(n-1)}&x_{2(n-1)}& 0& \ldots  & 0 & 0 \\
                x_{1n} & 0 & 0 & \ldots & 0 & 0
            \end{array} \right],
            \end{equation}
  where $\{x_{i,j}\}$ are independent and identically distributed random variables.

Note that if we
remove the symmetry from the above matrix and consider the matrix
with i.i.d.\ entries above the anti-diagonal,  then it can easily be linked to $T_n$.
If we multiply this
matrix by a
matrix $P$ whose  entries on the anti-diagonal are $1$ and the rest
are zero, then we get a matrix of the form $T_n$.
It can also be seen that $(W_n^u)^2=T_n T_n^*$ where $T_n$
is an upper triangular matrix \eqref{def: T_n} but has dependent
rows and columns now.

Our goal is to study the spectral properties of $n^{-1/2}W_n^u$.
More  generally, we consider symmetric triangular patterned random
matrices and whose entries are not necessarily Gaussian.

We use the classical moment method to show that the limiting
spectral distribution (LSD) of triangular Wigner, Toeplitz,  Hankel
and Symmetric
Circulant  matrices exist when the entries have mean
zero and variance one and, are either (i) independent and uniformly
bounded or (ii) i.i.d.

The joint convergence of $p$ sequences of random matrices is defined by the
convergence of $\frac{1}{n}\E \Tr (q)$ for all  monomials $q$ of the $p$ matrices of order $n$.
 The LSD result for one patterned matrix at a time can be extended to the joint convergence
 of any combination of these patterned matrices when we assume that the sequences are independent
 and the entries are independent with mean zero and variance one and
 have uniformly bounded moments of all orders. All the limit results are universal (do not depend on the underlying
distribution of the entries).

The identification of the LSD
of general patterned matrices  is not a
trivial problem. For
certain
(full) circulant type matrices this can be done
(see for example \citet{arup:joydip, massey:miller, kologlu:2011}). One may recall that for the full Toeplitz and Hankel matrices,
no formulae of any sort are known for the LSDs or their  moments (see \citet{hammond:miller, bry, bosesen}).
By  exploiting the Catalan recursions
inbuilt in the Wigner matrix, we show that the $(2k)$th moment of the
LSD for $W_n^u$ is given by (\ref{eq:triangular moments}) and the
odd moments are zero.
However, moment formulae for other matrices
remain elusive and appear rather difficult to obtain.

Identification of any structure in the joint limit moments appears much harder, even for the
triangular Wigner matrix. Unlike the full Wigner, where independent sequences of Wigner matrices
are asymptotically free, the triangular Wigner matrices are not so. It would be interesting to
unearth the structure of the joint limit for the triangular Wigner matrices. Likewise, the
full Reverse Circulant joint limit satisfies half independence with symmetric Rayleigh marginals
 and the full Symmetric
Circulant joint limit is totally independent with Gaussian marginals (see \citet{bosehazrasaha}).
However, the joint moment structure for  their triangular versions
 seems rather hard to obtain. We do not even know the LSD or the moments of the marginals.

In Section \ref{sec:general} we
describe our setup, notation and state and prove the main existence results for the LSD. In Section \ref{sec:lsdwigner} we concentrate on the triangular Wigner
matrix and derive the moments of the LSD. In Section \ref{sec:comments} we outline how our
results may be extended to joint convergence and also discuss many interesting questions which
arise in the study of random triangular matrices.

\section{\bf Patterned triangular matrices and method of moments}\label{sec:general}
For recent developments on random patterned matrices we refer
the readers
to~\citet{icm}. We adopt the method developed in~\citet{bry}
and~\citet{bosesen}. Here
is a quick description of the concepts and notation we
borrow from these works.

A sequence or bi-sequence of variables $\{ x_i; i \geq 0\}$ or $\{
x_{ij};\ i, j\geq 1 \}$ will be called an  \textit{input sequence}.
Let  $\mathbb Z$ be the set of all integers and let $\mathbb
Z_{+}$ denote the set of all non-negative integers. Let
\begin{equation}\label{eq:link}
L_n: \{1, 2, \ldots n\}^2 \to \mathbb Z^d, \  n \geq 1, \ d=1,2
\end{equation} be a sequence of functions such that
$L_{n+1}(i,j)=L_n (i,j)$ whenever $ 1\leq i,  j \leq n$. We shall
write $L_n=L$ and call it the \textit{link} function and by abuse of
notation we write $\mathbb Z_{+}^2$ as the common domain of $\{ L_n
\}$.
Patterned matrices are
matrices of the form
\begin{equation}\label{eq:patternmatrix}
X_n=((x_{L(i,j)})).
\end{equation}
Throughout this article we assume that the
triangular version is derived from a patterned random matrix $X_n$.
Let
$X_n^u$-be the upper triangular version of the  $X_n$ matrix where the
$(i,j)$-th entry of $X_n^u$ is given by $x_{L(i,j)}$ if $(i+j)\leq
n+1$ and $0$ otherwise. We shall often drop the subscript $n$, and
simply denote the matrix by $X^u$. The  triangular Wigner matrix has
already been defined. Here are examples of the triangular versions
of some more common patterned matrices:\\

{ \bf 1. Triangular  Hankel matrix}.
\begin{equation}\label{def:wigner9}
        H_n^u =
            \left[ \begin{array} {cccccc}
                x_{2} & x_{3} & x_{4} & \ldots & x_{n} & x_{n+1} \\
                x_{3} & x_{4} & x_{5} & \ldots & x_{n+1} & 0 \\
                      &       &       & \vdots &       &   \\
                x_{n}&x_{n+1}& 0& \ldots  & 0 & 0 \\
                x_{n+1} & 0 & 0 & \ldots & 0 & 0
            \end{array} \right].
\end{equation}
{ \bf 2. Triangular  Toeplitz matrtix}.
\begin{equation}\label{def: toep}
  T_{n}^u \  = \left[ \begin{array} {cccccc}
            x_{0} & x_{1} & x_{2} & \ldots & x_{n-2} & x_{n-1} \\
            x_{1} & x_{0} & x_{1} & \ldots & x_{n-3} & 0 \\
            x_{2} & x_{1} & x_{0} & \ldots & 0 & 0 \\
                  &       &       & \vdots &       &   \\
            x_{n-1} & 0 & 0 & \ldots & 0 & 0
            \end{array} \right].
\end{equation}
{\bf 3. Triangular  Symmetric Circulant Matrix}.
\begin{equation}\label{def: symm_circ}
    S_{n}^u \  = \left[ \begin{array} {cccccc}
            x_{0} & x_{1} & x_{2} & \ldots & x_{2} & x_{1} \\
            x_{1} & x_{0} & x_{1} & \ldots & x_{3} & 0 \\
            x_{2} & x_{1} & x_{0} & \ldots & 0 & 0 \\
                  &       &       & \vdots &       &   \\
            x_{1} & 0 & 0 & \ldots & 0 & 0
            \end{array} \right].
\end{equation}
It is to be noted that a triangular  Reverse Circulant matrix is the
same as a triangular  Hankel matrix.

To prove the existence of LSD via moment method, it suffices to
verify the following three conditions.
\begin{enumerate}
\item {For every $k\geq 1$, $\frac{1}{n}{\rm E}\left[{\rm Tr}\left(\frac{1}{\sqrt{n}}X_n^u\right)^k\right] \rightarrow \beta_k$
as $n \rightarrow \infty$ (Condition M1).}

\item For every $k\geq 1$, \begin{equation*}{\rm E}\left[\frac1n{\rm Tr}\left((\frac{1}{\sqrt{n}}X_n^u)^k)-\E(\frac1n\Tr((\frac{1}{\sqrt{n}}X_n^u)^k)\right)\right]^4=O\left(n^{-2}\right)\text{ as } n \rightarrow \infty \quad (\text{Condition M4}).
\end{equation*}
\item {$\sum_k \beta_{2k}^{-1/2k}= \infty$ (Carleman's Condition).}
\end{enumerate}

The two main conditions we need are: \\

\noindent
{\bf Assumption A:}
 $\{ x_i\}$ or $\{ x_{ij}\}$ are independent with mean 0 and
variance 1 and \textit{either} (i) have uniformly bounded moments of all orders  \textit{or}
(ii) are identically distributed. \\

The restriction of the so called Property B of \citet{bosesen} to the triangular
matrices is given below.\\

\noindent
{\bf Assumption B:} The link function $L$ satisfies
$Property~B'$, i.e.,
\begin{equation*}\Delta = \sup_n \sup_{ t } \sup_{1 \leq k \leq n} \# \{
l: 1 \leq l \leq n, k+l\leq n+1, \ L(k,l) =t\} < \infty
.\end{equation*}
Since the moment method entails computing all moments, we shall also use the
following assumption. \\

\noindent
{\bf Assumption C:}  $\{x_i\}$ or $\{x_{ij}\}$ are independent
with mean $0$, variance 1, and are  uniformly bounded.\\

In the course of verifying  conditions (1), (2) and (3), we shall need the notion of circuits, words, etcetera from  \citet{bosesen}.

Any function\  $\pi:\{0,1,2,\cdots,h\} \rightarrow
\{1,2,\cdots,n\} $ \ with  $\pi(0) = \pi(h)$ is called a
\textit{circuit} of
\textit{length} $h$.    The dependence
of a circuit on  $h$ and $n$ will be suppressed.

Two circuits $\pi_1$ and $\pi_2$
are \textit{equivalent} if and only if their $L$ values
respectively match at the same locations. That is,
$$\big\{L(\pi_1(i-1), \pi_1(i)) = L(\pi_1(j-1),
\pi_1(j)) \Leftrightarrow L(\pi_2(i-1), \pi_2(i)) = L(\pi_2(j-1),
\pi(j))\big\}.$$

\noindent Any equivalence class
can be indexed by a partition of $\{1,2,\cdots,h\}$. Each block of
a given partition identifies the positions where the $L$-matches
take place. We can label these partitions by \textit{words} of letters where  the first occurrence of each letter  is in alphabetical order. For example if $h = 5$ then the partition
$\{ \{1,3,5\}, \{2,4\}\} $ is represented by the word $ababa$.
This identifies the circuits $\pi$ such that $L(\pi(0), \pi(1))=
L(\pi(2), \pi(3))=L(\pi(4), \pi(5))$ and $L(\pi(1),
\pi(2))=L(\pi(3), \pi(4))$. Let $w[i]$ denote the $i$-th
entry of $w$.
A word is said to be \textit{pair-matched} if each letter appears exactly twice.
Denote by $|w|$ the length of the word $w$.

For a  pair-matched word $w$ of length $2k$,  let
\begin{eqnarray*}
\Pi_{X}^*(w)&=&\{\pi: w[i]=w[j] \Rightarrow
L(\pi(i-1),\pi(i))=L(\pi(j-1),\pi(j))\} \\
\Pi_{X}(w)&=&\{\pi: w[i]=w[j] \Leftrightarrow
L(\pi(i-1),\pi(i))=L(\pi(j-1),\pi(j))\}.
\end{eqnarray*}
Note that $\Pi_{X}(w)$ is nothing but the equivalence class of circuits corresponding to the word $w$.
 We shall write now
\begin{eqnarray*}
\frac{1}{n}{\rm Tr}((\frac{1}{\sqrt{n}}X^u)^k) &=&
\frac{1}{n^{1+k/2}}\sum_{\pi:\pi~{\rm circuit}}\ \ \prod_{i=1}^k
x_{L(\pi(i-1),\pi(i))}1_{\{\pi(i-1)+\pi(i)\leq n+1\}}\\ &=&
\frac{1}{n^{1+k/2}}\sum_w \ \ \sum_{\pi\in \Pi(w)}\ \
\prod_{i=1}^k x_{L(\pi(i-1),\pi(i))}1_{\{\pi(i-1)+\pi(i)\leq n+1\}}.
\end{eqnarray*}
Let us denote
\begin{equation*}X_{\pi}*= \prod_{i=1}^k x_{L(\pi(i-1),\pi(i))}1_{\{\pi(i-1)+\pi(i)\leq n+1\}}.\end{equation*}
For a fixed $k$, let us define the class $\Pi_{k}^u$ as follows.
\begin{equation*}\Pi_{k}^u=\{\pi:\pi~{\rm is~circuit~of~length}~k,\  \pi(i-1)+\pi(i)\leq n+1, 1\leq i\leq k\}. \end{equation*}
Let us denote \begin{equation*}\Pi_{1,X}(w)=\Pi_X(w)\bigcap
\Pi_{2k}^u \ \text{ and} \ \Pi_{1,X}^*(w)=\Pi_X^*(w) \bigcap
\Pi_{2k}^u.\end{equation*} For every pair-matched word $w$ of length
$2k$, define, if the limit exists,
\begin{equation*}p_{u,X}(w)= \lim_{n \rightarrow \infty} \frac{\#\Pi_{1,X}^*(w)}{n^{1+k}} \end{equation*}
where for any set $A$, $\#A$ denotes the number of elements of $A$.

When there is no chance for confusion, we simply write $p_u(w)$ for $p_{u, X}(w)$ and $\Pi_1(w)$ and $\Pi_1^*(w)$ for $\Pi_{1,X}(w)$ and $\Pi_{1,X}^*(w)$ respectively.

By using
the arguments of \citet{bosesen}, we immediately get the following theorem.
\begin{theorem}
\label{theorem:convergence}
Let $\{X_n^u\}$ be a sequence of patterned triangular  matrices
satisfying Assumptions A and B. Suppose for every $k\geq 1$ and for
every pair-matched word $w$ of length $2k$, $p_u(w)$ exists. Then the
LSD of $\frac{X_n^u}{\sqrt{n}}$ exists a.s. The LSD is universal, symmetric
about 0 and is determined by the even moments
\end{theorem}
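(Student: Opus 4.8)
The plan is to verify the three conditions M1, M4 and Carleman's condition stated just above; by the moment method these together force the empirical spectral measures of $n^{-1/2}X_n^u$ to converge almost surely to a symmetric probability law that is determined by its moments and is distribution-free. The single structural remark that makes the passage from the full patterned matrix to its triangular version painless is that the extra factor $1_{\{\pi(i-1)+\pi(i)\le n+1\}}$ occurring in $X_\pi$ is an \emph{indicator}: it can only delete circuits, never create them. Hence every circuit-counting upper bound available in the non-triangular setting of \citet{bosesen} remains valid verbatim, and the only genuinely new input is the standing hypothesis that the restricted limits $p_u(w)$ exist.

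First I would establish M1. Writing the trace over circuits and regrouping by equivalence classes (words) gives
$$\frac1n\,\E\,\Tr\!\left(\left(\tfrac{1}{\sqrt n}X_n^u\right)^{k}\right)=\frac{1}{n^{1+k/2}}\sum_{w}\ \sum_{\pi\in\Pi(w)}\E[X_\pi],$$
where $X_\pi$ denotes the product term defined above. If $w$ contains a letter occurring exactly once, then independence and mean zero give $\E[X_\pi]=0$, so only words in which every letter repeats survive. For such a $w$ with $b$ distinct letters, Assumption~B (Property $B'$) bounds the number of admissible circuits by $O(n^{b+1})$, exactly as in the full case, while repetition forces $b\le k/2$; against the normalisation $n^{1+k/2}$ the contribution is $o(1)$ unless $b=k/2$, i.e. $k$ is even and $w$ is pair-matched. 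In particular $\beta_k=0$ for odd $k$, since then no pair-matched word exists. For $k=2m$ one checks that $\#\bigl(\Pi_{1}^*(w)\setminus\Pi_{1}(w)\bigr)=o(n^{1+m})$ and that on $\Pi_1(w)$ each surviving $\E[X_\pi]$ equals the product of variances, namely $1$, so the block contributes $p_u(w)$. Thus $\beta_{2m}=\sum_{w}p_u(w)$, the sum running over pair-matched words of length $2m$, a quantity that does not involve any moment of the entries beyond the variance.

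Next I would handle M4 together with Carleman's condition. For M4 one expands the centred fourth moment as a sum over quadruples of circuits and argues, again by the counting lemmas of \citet{bosesen}, that a non-vanishing contribution forces the four circuits to be ``connected'' through shared $L$-values; the resulting deficit in free vertices yields the bound $O(n^{-2})$, with the triangular indicators only improving the estimate. Summability of $O(n^{-2})$ then upgrades the convergence in M1 to almost sure convergence of each moment via the Borel--Cantelli lemma. For Carleman's condition it suffices to bound $\beta_{2m}$: since Property $B'$ gives $\#\Pi_{1}^*(w)\le\#\Pi^*(w)\le \Delta^{m}n^{m+1}$, we have $p_u(w)\le\Delta^{m}$, and the number of pair-matched words of length $2m$ equals $(2m-1)!!$, so $\beta_{2m}\le\Delta^{m}(2m-1)!!$; this yields $\beta_{2m}^{-1/2m}\ge c\,m^{-1/2}$ and hence $\sum_m\beta_{2m}^{-1/2m}=\infty$.

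Finally, the almost sure moment convergence plus determinacy of the limiting sequence (Carleman) gives almost sure weak convergence of the spectral measures to a unique law, establishing the LSD; the vanishing of all odd moments gives symmetry about $0$; and the fact that $\beta_{2m}$ is built only from the universal quantities $p_u(w)$ gives universality. The main obstacle is the M4 estimate: unlike M1 it is a genuinely multi-circuit combinatorial count, and one must confirm that the connectivity and degrees-of-freedom bookkeeping of \citet{bosesen} survives once the admissible vertex set is cut down to the triangular region $\{(i,j):i+j\le n+1\}$. Because this restriction is merely an intersection with the full index set, all the relevant inequalities only tighten, so no new phenomenon arises and the estimate carries through unchanged.
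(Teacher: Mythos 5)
Your proposal follows essentially the same route as the paper: M1 by grouping circuits into words and observing that the triangular indicator can only delete circuits, so all the counting bounds of \citet{bosesen} persist and only pair-matched words survive; M4 by the standard four-circuit argument carried over verbatim; and Carleman via $\#\Pi_1^*(w)\le\#\Pi^*(w)\le\Delta^k n^{k+1}$, giving $p_u(w)\le\Delta^k$ and $\beta_{2k}\le\frac{(2k)!}{2^k k!}\Delta^k$. The one ingredient you omit is the initial truncation step (as in \citet{bosesen}) reducing Assumption A(ii) --- i.i.d.\ entries with possibly no finite moments beyond the second --- to the uniformly bounded case of Assumption C; without it the quantities $\E[X_\pi]$ for words with higher-order matches and the fourth-moment expansion in M4 need not even be defined, so you should state this reduction explicitly before running the moment method.
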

\begin{equation}\label{2kmoment}
\beta_{2k}=\displaystyle\sum_{\substack{{w\ \text{pair-matched,}} \\
|w|=k}}p_u(w), \ \ k\ge1.
\end{equation}
\begin{proof} We provide
a brief sketch of the proof. By using a truncation argument (see for example \citet{bosesen}),
we may work under the stronger Assumption C.
The first lemma in particular shows
that the odd moments are zero and the LSD is symmetric.
\begin{lemma} Suppose Assumptions B and  C hold.
Let $w$ be a matched word of length $k$ which is not pair-matched.
Then
\begin{equation*}\lim_{n\rightarrow \infty}\frac{1}{n^{1+k/2}}\sum_{\pi \in
\Pi_1^*(w)}{\rm E}[X_{\pi^*}]=0.\end{equation*}
\end{lemma}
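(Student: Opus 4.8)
The plan is to reduce the statement to a pure counting bound, since under Assumption~C the randomness is essentially inert. First I would use the uniform boundedness of the entries: there is a constant $C$, depending only on $k$ and the common bound on the moments, such that $|{\rm E}[X_{\pi^*}]| \le C$ for every circuit $\pi$ (each surviving factor is a moment $\mathrm{E}[x^m]$ with $m\le k$, hence bounded). Consequently
\[
\Big|\frac{1}{n^{1+k/2}}\sum_{\pi \in \Pi_1^*(w)}{\rm E}[X_{\pi^*}]\Big| \le \frac{C\,\#\Pi_1^*(w)}{n^{1+k/2}},
\]
so it suffices to prove that $\#\Pi_1^*(w) = o\big(n^{1+k/2}\big)$.

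The heart of the argument is the vertex-counting estimate $\#\Pi_1^*(w) \le \Delta^{\,k}\,n^{1+r}$, where $r$ denotes the number of distinct letters in $w$. I would obtain it through the generating/non-generating vertex scheme. Choose $\pi(0)$ freely, giving $n$ possibilities, and then traverse $i=1,\dots,k$, at each step determining $\pi(i)$ from $\pi(i-1)$. When $w[i]$ is the first occurrence of its letter I call $\pi(i)$ generating and allow all $n$ values; when $w[i]$ repeats an earlier letter $w[j]$ with $j<i$, the defining constraint of $\Pi_X^*(w)$ forces $L(\pi(i-1),\pi(i))=L(\pi(j-1),\pi(j))$, and since $\pi(i-1)$ and the target value are already fixed, Property $B'$ (Assumption~B) bounds the number of admissible second coordinates $\pi(i)$ by $\Delta$; this applies verbatim because every circuit in $\Pi_1^*(w)\subseteq\Pi_k^u$ obeys the triangular restriction $\pi(i-1)+\pi(i)\le n+1$ built into $B'$. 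As $w$ is matched, its last letter is never a first occurrence, so exactly $r$ positions are first occurrences; together with $\pi(0)$ this gives $1+r$ generating vertices and at most $k-r$ non-generating ones, whence the claimed bound.

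Finally I would invoke the combinatorics of matched words. Since each letter of $w$ appears at least twice, $k=\sum_{\text{letters}}(\text{multiplicity})\ge 2r$, so $r\le k/2$, with equality exactly when $w$ is pair-matched. Because $w$ is assumed \emph{not} pair-matched, some letter has multiplicity at least three, forcing the strict inequality $r<k/2$; integrality of $r$ and $k$ then yields $r\le (k-1)/2$, i.e.\ $r-k/2\le -\tfrac12$. Combining the three steps,
\[
\Big|\frac{1}{n^{1+k/2}}\sum_{\pi \in \Pi_1^*(w)}{\rm E}[X_{\pi^*}]\Big| \le C\,\Delta^{\,k}\,n^{\,r-k/2}\le C\,\Delta^{\,k}\,n^{-1/2}\longrightarrow 0 .
\]

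The only genuinely substantive step is the counting estimate $\#\Pi_1^*(w)=O(n^{1+r})$, and I expect it to be the main point to get right; everything surrounding it is bookkeeping. Its proof rests entirely on Property $B'$, which is precisely the mechanism ensuring that each non-generating vertex contributes only the bounded factor $\Delta$ rather than a factor of order $n$. I would remark that mean-zero of the entries is not actually used here: it is the uniform boundedness together with the dimensional deficit $r<k/2$ that drives the vanishing. (Mean-zero is instead what would handle the complementary, non-matched words, where a singleton letter makes ${\rm E}[X_{\pi^*}]$ vanish identically.)
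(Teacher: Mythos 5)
Your proof is correct, but it takes a different route from the paper's. The paper disposes of this lemma in three lines by domination: it observes that $|X_{\pi^*}|\le |X_{\pi}|$ (the indicators only shrink the product) and $\Pi_1^*(w)\subset\Pi^*(w)$, and then invokes the corresponding result for the \emph{full} patterned matrix already proved in Bose and Sen, namely $\lim_n n^{-(1+k/2)}\sum_{\pi\in\Pi^*(w)}\mathrm{E}|X_{\pi}|=0$. You instead re-derive that estimate from scratch via the generating-vertex count $\#\Pi_1^*(w)\le \Delta^{k}\,n^{1+r}$ together with the observation that a matched but not pair-matched word has some letter of multiplicity at least three, hence $r\le (k-1)/2$ and a surviving factor $n^{-1/2}$. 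Your counting argument is sound (and is essentially the proof hiding inside the citation), and it has the merit of being self-contained and of showing explicitly that Property $B'$ is exactly what controls the non-generating vertices under the triangular restriction; the paper's version buys brevity by outsourcing the combinatorics and only needing the two monotonicity observations. Your closing remark that mean-zero plays no role for matched-not-pair-matched words, and is only needed to kill the non-matched words, is also consistent with how the paper structures its argument.
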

\begin{proof} First note that if $w$ is non-matched then $\E[X_{\pi^*}]=0$ since the entries have mean zero.
Now consider a $w$ which has more than two matches.
Let $X_{\pi}=\prod_{i=1}^k x_{L(\pi(i-1),\pi(i))}$. Clearly then,
$|X_{\pi^*}| \leq |X_{\pi}|$ and hence ${\rm E}|X_{\pi^*}| \leq {\rm
E}|X_{\pi}|$. It is proved in  \citet{bosesen} that
\begin{equation*}\lim_{n\rightarrow \infty}\frac{1}{n^{1+k/2}}\sum_{\pi \in \Pi^*(w)}{\rm E}|X_{\pi}|=0. \end{equation*}
The lemma follows immediately from this since $\Pi_1^*(w) \subset
\Pi^*(w)$.
\end{proof}
The next lemma helps to verify Condition M4. The proof is a
direct generalization of Lemma 2 of~\citet{bosesen}. We skip
the details.
\begin{lemma}
Let $\{ X_n^u\}$ be a sequence of random patterned triangular  matrices
satisfying Assumptions B and C. Then
\begin{equation*}{\rm E}\left [\frac{1}{n}{\rm Tr}(\frac{1}{\sqrt{n}}X_n^u)^k-{\rm
E}(\frac{1}{n}{\rm
Tr}(\frac{1}{\sqrt{n}}X_n^u)^k)\right]^4=O\left(n^{-2}\right).
\end{equation*}
\end{lemma}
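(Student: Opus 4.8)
The plan is to run the classical fourth-moment argument, the only genuinely new feature being that the triangular indicator merely thins out the admissible circuits. Abbreviating $S_n = \frac1n\Tr\big((\tfrac{1}{\sqrt n}X_n^u)^k\big) - \E\big[\frac1n\Tr\big((\tfrac{1}{\sqrt n}X_n^u)^k\big)\big]$ and using the circuit expansion displayed above, I would write
\[
\E\big[S_n^4\big] = \frac{1}{n^{4+2k}}\sum_{\pi_1,\pi_2,\pi_3,\pi_4}\E\prod_{j=1}^4\big(X_{\pi_j}^* - \E X_{\pi_j}^*\big),
\]
the outer sum running over all quadruples of circuits of length $k$. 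Under Assumption C the entries are uniformly bounded, so each summand is at most a constant $C_k$ in absolute value, and the whole task reduces to counting the quadruples whose summand does not vanish.

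The first step is the centering argument. I form the dependency graph on $\{1,2,3,4\}$ in which $i$ and $j$ are joined whenever $\pi_i$ and $\pi_j$ share an $L$-value. If some circuit $\pi_j$ is isolated in this graph, then $X_{\pi_j}^* - \E X_{\pi_j}^*$ is independent of the three remaining factors and, being centered, forces the expectation to vanish. Thus only quadruples whose dependency graph has no isolated vertex contribute; since the graph has four vertices and every component must then contain at least two of them, the number $c$ of components satisfies $c \le 2$.

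The second step is the vertex count. Grouping the quadruples according to their joint matching word $w$ (now of length $4k$) and invoking Property~$B'$ from Assumption B, the number of circuits in a jointly matched class with $t$ distinct letters and dependency graph having $c$ components is at most $O(n^{t+c})$; the dominant contribution comes from pair-matched words, for which $t = 2k$. Combined with $c \le 2$ this yields at most $O(n^{2k+2})$ surviving quadruples, words with a letter of multiplicity other than two contributing strictly lower order. The point specific to the present setting is that the triangular indicator only removes circuits, so the inclusion $\Pi_1^*(w) \subset \Pi^*(w)$ holds for every such $w$ and each count can only decrease. Therefore
\[
\E\big[S_n^4\big] \le \frac{C_k\cdot O(n^{2k+2})}{n^{4+2k}} = O\big(n^{-2}\big),
\]
which is the claim.

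I expect the main obstacle to be the bookkeeping in the vertex count, namely establishing the estimate $O(n^{t+c})$ and checking that the two admissible cluster patterns ($c=1$, all four circuits connected, and $c=2$, two internally matched pairs) produce orders $n^{2k+1}$ and $n^{2k+2}$ respectively, so that the $c=2$ pattern is the one that dictates the $n^{-2}$ rate. This is exactly the computation carried out in Lemma 2 of \citet{bosesen}; because passing to the triangular model only restricts the circuit classes through $\Pi_1^*(w)\subset\Pi^*(w)$, no new combinatorics is needed beyond recording this inclusion at each step.
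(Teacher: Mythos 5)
Your proposal is correct and takes essentially the same route as the paper, which simply states that the result is a direct generalization of Lemma 2 of \citet{bosesen} and skips the details. You have in fact supplied exactly those skipped details --- the dependency-graph reduction to $c\le 2$ components, the $O(n^{t+c})$ circuit count, and the observation that the inclusion $\Pi_1^*(w)\subset\Pi^*(w)$ lets every bound for the full matrix carry over to the triangular one --- so nothing further is needed.
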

Since there does not exist any pair-matched word of length $k$, for
$k$ odd, it follows from Lemma $2.1$ that
\begin{equation*}\beta_k=\lim_{n\rightarrow \infty}{\rm E}(\frac{1}{n}{\rm Tr}(\frac{X_n^u}{\sqrt{n}})^k)=0, \quad\text{if $k$ is odd}. \end{equation*}
It also follows that
\begin{equation*}\beta_{2k}=\lim_{n\rightarrow \infty}{\rm E}(\frac{1}{n}{\rm Tr}(\frac{X_n^u}{\sqrt{n}})^{2k})=\displaystyle\sum_{\substack{{w\  \text{pair-matched,}} \\
|w|=k}}p_u(w).\end{equation*} Hence Condition M1 also holds. From
Lemma 2.2 it follows that Condition M4 is satisfied. It remains to
show that the sequence $\{\beta_{k}\}$ satisfies Carleman's
condition. It follows easily from
Assumption B that for every
pair-matched word of length $2k$,
\begin{equation*}\#\Pi_1^*(w)\leq \#\Pi^*(w)\leq n^{k+1}\Delta ^k .\end{equation*}
Hence $p_u(w)\leq \Delta^k$ and so $\beta_{2k} \leq \frac{(2k!)}{2^k
k!}\Delta ^k$ and Carleman's condition is thus easily verified.
\end{proof}
We can use Theorem~\ref{theorem:convergence} to prove the existence of LSD for the
triangular  matrices listed earlier.
\begin{theorem}
Let $\{X_n^u\}$ be any of the following triangular  matrices with input
sequence
satisfying Assumption A: triangular  Wigner, triangular  Hankel, triangular
Toeplitz and triangular  Symmetric Circulant. Then the LSD for
$\frac{X_n^u}{\sqrt{n}}$ exists almost surely. The LSDs are universal and are
symmetric with even moments given by (\ref{2kmoment}).
\end{theorem}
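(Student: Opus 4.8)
The plan is to verify the single nontrivial hypothesis of Theorem~\ref{theorem:convergence} — that $p_u(w)$ exists for every pair-matched word $w$ — separately for each of the four link functions, and then simply quote that theorem. Assumption A is given in the statement, so the first thing I would record is that Assumption B (Property $B'$) holds in all four cases. This is immediate: the full (non-triangular) Wigner, Hankel, Toeplitz and Symmetric Circulant link functions are all known to satisfy Property $B$ of \citet{bosesen} (with $\Delta\le 2$), and since the triangular requirement $k+l\le n+1$ only removes indices from the set counted in Property $B'$, the supremum defining $\Delta$ can only decrease. Hence Assumption B is inherited from the full matrices at no extra cost.

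The substance of the argument is the existence of $p_u(w)=\lim_n \#\Pi_1^*(w)/n^{1+k}$ for each pair-matched $w$ of length $2k$, and the approach is the standard scaling/volume method adapted to the triangular truncation. Writing $u_i=\pi(i)/n\in(0,1]$ for a circuit $\pi$, I would observe that for each of these patterns the matching requirement $w[i]=w[j]\Rightarrow L(\pi(i-1),\pi(i))=L(\pi(j-1),\pi(j))$ is an affine-linear relation in the $u_i$: for Wigner it is $\{u_{i-1},u_i\}=\{u_{j-1},u_j\}$, for Hankel $u_{i-1}+u_i=u_{j-1}+u_j$, for Toeplitz $u_i-u_{i-1}=\pm(u_j-u_{j-1})$, and for the Symmetric Circulant the same Toeplitz-type relation read modulo $1$. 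Solving these $k$ relations expresses all vertices in terms of a generating set $S$ of coordinates; for a pair-matched word one has $|S|\le k+1$, so the contribution is of order $n^{|S|}$. The only new ingredient is that the triangular constraint contributes the additional family of half-spaces $u_{i-1}+u_i\le 1$, $1\le i\le 2k$.

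With this set-up, $\#\Pi_1^*(w)$ is a count of lattice points of $\{1,\dots,n\}^{2k}$ lying in the region carved out by the affine-linear matching equalities together with these triangular half-spaces, and $\#\Pi_1^*(w)/n^{1+k}$ is then a Riemann sum for the Lebesgue measure of the limiting region
\[
D_w=\bigl\{(u_i)_{i\in S}\in[0,1]^{|S|}:\ \text{all matching relations hold and } u_{i-1}+u_i\le1\ \text{for } 1\le i\le 2k\bigr\}.
\]
Because $D_w$ is a polytope (or, for the circulant, a finite union of polytopes), its topological boundary has Lebesgue measure zero, so the lattice-point count divided by $n^{|S|}$ converges to $\Leb(D_w)$; after the normalization $n^{1+k}$, the limit $p_u(w)=\Leb(D_w)$ exists, equalling $0$ whenever $|S|<k+1$. (The extra coincidences permitted by the $\Rightarrow$ in $\Pi^*$ over $\Pi$ only affect strictly lower-order terms and do not disturb the leading behaviour.) This is exactly the existence statement required.

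The step I expect to be the main obstacle is the Symmetric Circulant. There the matching relation $u_i-u_{i-1}\equiv\pm(u_j-u_{j-1})\pmod 1$ forces a case split according to which integer part is ``wrapped around,'' so the limiting region is genuinely a finite union of polytopes rather than a single one, and one must check that the truncation half-spaces interact cleanly with each branch and that no branch contributes through a positive-codimension face in a way that would spoil the Riemann-sum convergence. Since each branch is again a polytope with measure-zero boundary, the convergence survives, but this bookkeeping is where the argument requires real care. Once existence of $p_u(w)$ is established for all four patterns, Theorem~\ref{theorem:convergence} delivers almost sure existence of the LSD, universality, symmetry, and the even-moment formula \eqref{2kmoment}, completing the proof.
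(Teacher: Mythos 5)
Your proposal is correct and follows essentially the same route as the paper: verify Assumption B, reduce to the existence of $p_u(w)$, and obtain it by the volume method, writing $\#\Pi_1^*(w)/n^{1+k}$ as a Riemann sum for the indicator of a polytope cut out by the affine matching relations together with the triangular half-spaces $v_{i-1}+v_i\le 1$. The paper sketches only the Wigner and Hankel cases (handling non-Catalan Wigner words via $p_u(w)\le p(w)=0$ rather than your dimension-count), but the underlying argument is the same.
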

\begin{proof} As before, without loss, we shall work under Assumption C.
Notice that the link functions of all these matrices satisfy
Assumption B. Thus Carleman's condition and Condition M4 follow
immediately if we can establish Condition M1. The combinatorics to show
the existence of $p_u(w)$ is done case-by-case. However, all the
proofs are similar and use the volume method. We provide a sketch
only for the triangular  Wigner and the triangular  Hankel matrices. In the full
matrix version of the matrices considered here, $p(w)$ is
evaluated as an integral of an indicator function (see for example~\citet{icm}). Here also,
$p_u(w)$ turns out to be an integral, but of a different indicator
function corresponding to $\Pi_1^*(w)$ instead of $\Pi^*(w)$.\\

\noindent{\bf Triangular  Wigner Matrix:} Let $L_W$ denote the Wigner
link function. Following \citet{bosesen}, a word is said to be \textit{Catalan} if it is pair-matched and
 deleting all double letters leads to the empty word. For example $abba$ is Catalan and
 $abab$ is not. Now it is shown in  \citet{bosesen} that
if $w$ is not Catalan then,
\begin{equation*}p(w)= \lim_{n \rightarrow \infty} \frac{\#\Pi^*(w)}{n^{1+k}}=0. \end{equation*}
As $\Pi^*(w)\supseteq \Pi_1^*(w)$, it follows that $p_u(w)=0$ if $w$
is non-Catalan. Thus we now focus on Catalan words of length $2k$.
We shall call any $\pi(i)$ (or by abuse of notation any $i$) a \textit{vertex}. Any $\pi(i)$ is  a \textit{generating  vertex} if a letter appears for the first time at $i$ (when read left to right).
We also define $\pi(0)$  (or 0) to be a generating vertex.
\begin{lemma}
\label{catalan3} Let $w$ be a Catalan word of length $2k$. Let $S$
denote the set of all
generating vertices of $w$.
Then for all $j \notin S$, there exists a unique $i \in S$ such
that $i<j$ and $\pi(j)=\pi(i)$ for all $\pi \in \Pi^*(w)$.
\end{lemma}
\begin{proof}
Let $\pi \in \Pi^*(w)$. Let $j$ be the minimum index of a
non-generating vertex of $w$. Clearly then, $w[j-1]=w[j]$ and hence
$\pi(j-2)=\pi(j)$. Since $j>j-2 \in S$, the result is true in this
case. Now let $j$ be any non-generating vertex. Let us assume that
for every non-generating vertex with index less than $j$, the result
holds. Let $i<j$ be the index of first occurrence of $w[j]$. Let
$w_1$ be the subword formed by letters between $w[i]$ and $w[j]$.
Since $w$ is Catalan, $w_1$ is also Catalan, and it can be easily
shown that, $\pi (i)=\pi(j-1)$ and hence $\pi(j)=\pi(i-1)$. If
$(i-1)\in S$, then we are already done. If $(i-1) \notin S$, then
also by induction hypothesis the result holds.

If the $i$ corresponding to a fixed $j$ is not unique then we have a
non-trivial relation between two generating vertices which implies
$\#\Pi^*(w)=O(n^k)$, and hence contradicting the fact that
$\lim_{n\rightarrow \infty}\frac{\#\Pi^*(w)}{n^{1+k}}=1$. This
proves the uniqueness.
\end{proof}
\begin{definition}\label{def:phi} For any $j$ (not necessarily in $S$), let us denote by $\phi(j)$
the {\it unique} vertex such that
\begin{equation*}\phi(j) \in S, \ \phi(j)\leq j\ \
{\rm  and}\ \ \pi(j)=\pi(\phi(j)) \ \ {\rm for~all} \ \ \pi \in
\Pi^*(w).\end{equation*}
\end{definition}
  Note that if $j
\in S$, then $j=\phi(j)$. Next we note that among the $2k$
equations, $\pi(i-1)+\pi(i)\leq n+1, \ 1\leq i \leq 2k$, each equation
is repeated twice, as $w[i]=w[j]\Rightarrow
\pi(i-1)+\pi(i)=\pi(j-1)+\pi(j)$. So we can write
\begin{eqnarray*}
\Pi_1^*(w)&=& \{\pi: w[i]=w[j] \Rightarrow L_W(\pi (i-1),\pi(i))=L_W(\pi (j-1),\pi(j)),\\
&& \ \ \ \ \ \ \pi(i-1)+\pi(i)\leq n+1 \ \forall i\in S-\{0\}\}\\
&=& \{\pi: \pi(j)=\pi(\phi(j)) \ \ \forall j \notin S,
\pi(\phi(i-1))+\pi(\phi(i))\leq n+1 \ \ \forall i\in S-\{0\} \}.
\end{eqnarray*}
Now we use the standard volume method arguments. Let us define
\begin{equation} \label{def: xx1}
v_i=\frac{\pi(i)}{n}, \  \ U_n=\left
\{\frac{1}{n},...,\frac{n-1}{n}, 1\right \}  \ \ \text{and} \ \
v_S=\{v_i:i\in S\}.
\end{equation}
Then,
\begin{eqnarray*}
\#\Pi_1^*(w)&=&\# \{(v_0,...,v_{2k}): v_i\in U_n \ \ \forall 0\leq
i\leq 2k, v_i=v_{\phi(i)} \ \ \forall i \notin S,\\ && \ \ \ \ \ \ \ \  \ \ \ \ \ \ \ \ \ \ \ \ \
v_{\phi(i-1)}+v_{\phi(i)}\leq 1+1/n \ \ \forall i
\in S-\{0\},v_0=v_{2k}\}\\
&=& \# \{v_S: v_i\in U_n \ \ \forall i \in S,
v_{\phi(i-1)}+v_{\phi(i)}\leq 1+1/n \ \ \forall i \in S-\{0\}\}.
\end{eqnarray*}
From the above equation it follows that
$\frac{\#\Pi_{1}^*(w)}{n^{1+k}}$ is nothing but the Riemann sum for
the function
$$I_W(v_S)=I( v_{\phi(i-1)}+v_{\phi(i)}\leq 1, i\in S-\{0\})$$
over $[0,1]^{k+1}$. Since the function is clearly Riemann
integrable, the Riemann sum converges to the integral
\begin{equation} \label{new1}
\lim_{n\rightarrow \infty}\frac{1}{n^{1+k}}\#\Pi_{1}^*(w)=
\int_{[0,1]^{k+1}} I_W(v_S)dv_S.
\end{equation}
It follows that $p_u(w)=\lim_{n\rightarrow
\infty}\frac{1}{n^{1+k}}\#\Pi_{1}^*(w)$ exists.\\

\noindent {\bf Triangular  Hankel Matrix:} The Hankel  link function is
$L(i,j)=i+j$. Here
\begin{equation*}
\Pi_1^*(w)=\{\pi: w[i]=w[j]\Rightarrow
\pi(i-1)+\pi(i)=\pi(j-1)+\pi(j), \pi(i-1)+\pi(i) \leq
n+1\}.\end{equation*}
Let $S$ denote the set of all generating vertices of $w$. For
every $i \in S-\{0\}$, let $j_i$ denote the index such that
$w[i]=w[j_i].$ Let us define $v_i, U_n, v_S$ as in (\ref{def:
xx1}). Then
\begin{eqnarray*}
\#\Pi_1^*(w) &=&\# \{(v_0,...,v_{2k}): v_i\in U_n, \forall~ 0\leq
i\leq 2k, v_{(i-1)}+v_i=v_{(j_i-1)}+v_{j_i}\forall i\in S,\\ & &
 \ \ \ \ \ \ \ \ \ \ \ \ \ \ \ \ \ \ \ \ \   v_{(i-1)}+v_{i}\leq 1+1/n, \forall i\in S-\{0\},v_0=v_{2k}\}.
\end{eqnarray*}
It can easily be seen from the above equations (other than
$v_0=v_{2k}$) that each of the $\{v_i: i\notin S\}$ can be written
uniquely as an integer linear combination $L_i(v_S)$. Moreover,
$L_i(v_S)$ only contains generating vertices of index less than $i$
with non-zero coefficients. For all $i\in S$, let us define
$L_i(v_s)=v_i.$

Clearly,
\begin{eqnarray*}
\#\Pi_{1}^*(w)&=&\# \left \{(v_0,...,v_{2k}): v_i\in U_n ~\forall~
0\leq
i\leq 2k,v_0=v_{2k}, v_i=L_i(v_S)  \forall i\notin S, \right.\\
&&\ \ \ \ \ \ \ \ \ \ \  \ \ \ \ \ \ \ \ \ \  \left. v_{i-1}+v_{i}\leq 1+1/n \ \forall i\in S-\{0\} \right \}.
\end{eqnarray*}
 Integer linear combinations of elements of $U_n$ are again in $U_n$ iff they are  between 0 and 1. Hence,
\begin{equation}\label{riemann7}
\begin{array}{lll}
\#\Pi_{1}^*(w) &=&\# \left \{v_S: v_i\in U_n \ \ \forall i \in S, v_0=L_{2k}(v_S), 0< L_i^l(v_S)\leq 1 \ \ \forall i\notin
S,\right. \\
&&\ \ \ \ \ \ \ \ \ \ \left. L_{i-1}(v_S)+L_i(v_s)\leq 1+1/n \ \ \forall i\in S-\{0\} \right \}.
\end{array}
\end{equation}
From (\ref{riemann7}) it follows that
$\frac{\#\Pi_{1}^*(w)}{n^{1+k}}$ is nothing but the Riemann sum for
the function
$$I_H (v_S)=I(0\leq L_i^l(v_S)< 1, i\notin S,
v_0=L_{2k}^l(v_S),L_{i-1}(v_S)+L_i(v_s)\leq 1\ \ \forall i \in
S-\{0\} )$$ over $[0,1]^{k+1}$. Since the function is clearly Riemann
integrable, the Riemann sum converges to the integral
\begin{eqnarray*}
\lim_{n\rightarrow \infty}\frac{1}{n^{1+k}}\#\Pi_{1}^*(w)=
\int_{[0,1]^{k+1}}  I_H(v_S)dv_S.
\end{eqnarray*}
Hence $p_u(w)$ exists for every pair-matched word $w$.

Incidentally,  following \citet{bosesen}, a pair-matched word is said to be
\textit{symmetric} if every letter appears once in an even and once in an odd position.
For a Hankel matrix,  $p(w)=0$ if $w$ is not a symmetric word. Hence it also follows that $p_u(w)=0$ for every such
word $w$ for the triangular Hankel matrix.
\end{proof}
The LSD of the Reverse Circulant, the Symmetric Circulant, the Toeplitz and
the Hankel matrices are unbounded.  The same is true for their triangular versions.
\begin{theorem}
The LSD of triangular Hankel,  Toeplitz and Symmetric Circulant matrices have unbounded support.
\end{theorem}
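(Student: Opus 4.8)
The plan is to sidestep moment computations entirely and instead combine Cauchy eigenvalue interlacing with the fact, recalled just above, that the \emph{full} symmetric Toeplitz and Hankel matrices have limiting spectral distributions with unbounded support. The structural observation driving everything is that each triangular matrix here contains, in its top-left corner, an exact \emph{untruncated} copy of a full patterned matrix of one of these known-unbounded types.

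First I would locate that full block. Fix $m=\lfloor (n+1)/2\rfloor$ and let $I=\{1,\dots,m\}$. For $i,j\in I$ we have $i+j\le 2m\le n+1$, so the indicator $1_{\{i+j\le n+1\}}$ equals $1$ and no truncation occurs; hence the $m\times m$ principal submatrix $B_n$ of $\tfrac{1}{\sqrt n}X_n^u$ indexed by $I$ is exactly $\tfrac{1}{\sqrt n}$ times a full patterned matrix of order $m$. For the triangular Hankel this corner is a genuine full Hankel matrix, and for the triangular Toeplitz a full Toeplitz matrix. For the triangular Symmetric Circulant, since $|i-j|\le m-1<n/2$ forces $\min(|i-j|,\,n-|i-j|)=|i-j|$, there is no wrap-around and the corner is again a full Toeplitz matrix. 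In every case $B_n$ is, up to the scalar $\sqrt{m/n}$, a full matrix $m^{-1/2}X_m^{\mathrm{full}}$ whose scaled LSD $\nu_{\mathrm{full}}$ exists and has unbounded support.

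Next I would transfer mass far out. Writing $B_n=\sqrt{m/n}\,\bigl(m^{-1/2}X_m^{\mathrm{full}}\bigr)$ and using $m/n\to 1/2$, the empirical spectral distribution of $B_n$ converges a.s.\ to the law of $2^{-1/2}Y$ with $Y\sim\nu_{\mathrm{full}}$. Unboundedness of $\nu_{\mathrm{full}}$ makes $c_M:=\nu_{\mathrm{full}}\bigl((\sqrt2\,M,\infty)\bigr)>0$ for every $M$, so $\tfrac1m\#\{i:\lambda_i(B_n)>M\}\to c_M>0$ a.s.\ (at continuity points). Since $\tfrac{1}{\sqrt n}X_n^u$ is symmetric and $B_n$ is a principal submatrix, Cauchy interlacing gives $\lambda_i\bigl(\tfrac{1}{\sqrt n}X_n^u\bigr)\ge\lambda_i(B_n)$ for $1\le i\le m$ (eigenvalues in decreasing order), so every eigenvalue of $B_n$ exceeding $M$ forces the corresponding eigenvalue of the whole matrix to exceed $M$. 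Therefore
\[
\frac1n\,\#\Bigl\{i:\lambda_i\bigl(\tfrac{1}{\sqrt n}X_n^u\bigr)>M\Bigr\}
\;\ge\;\frac{m}{n}\cdot\frac1m\,\#\{i:\lambda_i(B_n)>M\}
\;\xrightarrow{\,n\to\infty\,}\;\tfrac12\,c_M>0\quad\text{a.s.}
\]
By Theorem~\ref{theorem:convergence} the LSD $\mu_u$ of $\tfrac{1}{\sqrt n}X_n^u$ exists, and the displayed $\liminf$ yields $\mu_u\bigl((M,\infty)\bigr)\ge\tfrac12 c_M>0$ for every $M$, so $\mu_u$ has unbounded support.

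The step I expect to carry the weight is the last one: because only about half of the eigenvalues are pinned down by the corner block, one must be sure that this controlled half already carries strictly positive mass arbitrarily far out, and interlacing is precisely what converts the block's large eigenvalues into large eigenvalues of the full triangular matrix, the surviving factor $\tfrac12$ being harmless. The only genuinely case-by-case point is the verification in the first step that the corner is an untruncated full matrix of a known-unbounded type, in particular the (mild) observation that the Symmetric Circulant corner degenerates to a Toeplitz block. This approach avoids the alternative moment route, whose real difficulty would be a uniform lower bound on the constrained volumes $p_u(w)$ across a super-exponentially large family of words.
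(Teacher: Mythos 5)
Your proof is correct, but it takes a genuinely different route from the paper's. The paper stays inside the moment/volume framework of Section 2: both $p(w)$ and $p_u(w)$ are integrals of indicator functions over $[0,1]^{k+1}$, and the single change of variables $y_i=v_i/2$ shows that the triangular constraint region contains the full constraint region shrunk into $[0,1/2]^{k+1}$, giving the uniform bound $p_u(w)\geq 2^{-k}p(w)$ for every pair-matched word; summing yields $\beta_{2k}\geq 2^{-k}\beta'_{2k}$ and unboundedness transfers from the full LSD through the growth of its moments. (So the ``real difficulty'' you flag at the end --- a uniform lower bound on the constrained volumes across all words --- is in fact dispatched by the paper in one line; your concern there is overstated.) Your argument instead works at the level of eigenvalues: the untruncated $\lfloor (n+1)/2\rfloor\times\lfloor (n+1)/2\rfloor$ corner is an exact full Hankel or Toeplitz matrix, and Cauchy interlacing pushes its asymptotically positive proportion of eigenvalues above any level $M$ into the spectrum of the whole triangular matrix. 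The two routes encode the same ``shrink by $2$'' comparison --- your tail bound $\mu_u([M,\infty))\geq \tfrac12\,\nu_{\mathrm{full}}([\sqrt{2}M,\infty))$ is the distributional shadow of $\beta_{2k}\geq 2^{-k}\beta'_{2k}$ --- but yours is more elementary (no word combinatorics beyond the already-established existence of the two LSDs), gives an explicit tail mass estimate rather than only moment growth, and generalizes to any triangular patterned matrix whose corner contains a full block of a known-unbounded type; what it costs is the case-by-case identification of that corner (your Symmetric Circulant observation) and the use of the a.s.\ full-matrix LSDs as a black box. One point to keep clean: from $\liminf_n\mu_n((M,\infty))\geq\tfrac12 c_M$ you should pass to the closed set via the portmanteau inequality $\limsup_n\mu_n([M,\infty))\leq\mu_u([M,\infty))$ to conclude $\mu_u([M,\infty))>0$; as you note, restricting to continuity points of the limit law handles this.
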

\begin{proof}
We shall prove  the theorem only for the triangular Hankel matrices. The proof for other matrices
is similar and is omitted. Recall that the $2k$-th moment of the LSD
for the full and triangular versions are given by
$$\beta_{2k}=
\sum_{\substack{{w\ \text{pair-matched,}} \\
|w|=k}}
p_u(w), \ \
\beta^\prime_{2k}=
\sum_{\substack{{w\ \text{pair-matched,}} \\
|w|=k}}
p(w).$$

We claim that for every $w$,
$$p_u(w)\geq \frac{1}{2^{k}}p(w).$$
To see this,
first note that  using the already introduced notation,
\begin{equation}
\label{hankelsupport1}
p_u(w)=\int_{[0,1]^{k+1}} I_H(v_S)dv_S.
\end{equation}
Also, clearly, by a similar logic for the full Hankel matrix,
$$p(w)=\int_{[0,1]^{k+1}} I(0\leq L_i^l(v_S)< 1, i\notin S,
v_0=L_{2k}^l(v_S))dv_S. $$
Making a change of variable $y_i=v_i/2$ for all $i\in S$, since $L_i^l$ are linear functions, we see that
\begin{eqnarray}
\label{hankelsupport3}
\frac{p(w)}{2^k}&=&\int_{[0,1/2]^{k+1}} I(0\leq L_i^l(y_S)< 1/2, i\notin S,
y_0=L_{2k}^l(y_S))dv_S \nonumber \\ &=& \int_{[0,1/2]^{k+1}} K(v_S) dv_S \ \ \text{say}.
\end{eqnarray}
Now it is trivial to note that
$$K(v_S) \leq I_H(v_S) \ \ \text{for all} \ \ v_S.$$
The claim now follows from (\ref{hankelsupport1}) and (\ref{hankelsupport3}).
As a consequence, $$\beta_{2k}\geq \frac{\beta^\prime_{2k}}{2^k}$$ and the result now follows from noting that the LSD of Hankel matrix has unbounded support.
\end{proof}
\section{\bf The LSD of Triangular  Wigner Matrices}\label{sec:lsdwigner}

As mentioned earlier, in general the LSD is hard to identify and we
have the answer only for the triangular Wigner matrix.
For the full Wigner matrix, $p(w)=1$ for each Catalan word, and as a consequence the LSD
is the semicircle law.
 In the
following table we list the values of $p_u(w)$ for Catalan
words of small lengths. It may be observed that the contributions
are equal within certain isomorphic classes but they are unequal in general. Though it does not
seem easy to obtain the individual $p_u(w)$'s for different
Catalan words, it is, however possible to calculate their total
contribution in a relatively simple way.
We turn to this direction now.\\

\begin{table}[h]
\begin{center}
\caption{$p_u(w)$ for Catalan words for $W_n^u$ }
\begin{tabular}{|c|c|}\hline
 Word & $p_u(w)$ \\
\hline
 aa & 1/2 \\
\hline
 aabb & 1/3 \\
\hline
 abba & 1/3 \\\hline
 aabbcc & 1/4\\\hline
 abbcca & 1/4 \\
\hline
 abbacc & 5/24\\\hline
 aabccb & 5/24 \\\hline
 abccba & 5/24 \\
\hline
\end{tabular}
\end{center}
\end{table}
\begin{theorem}\label{theorem:moments}
Let $W_n^u$ be a triangular  Wigner matrix with an input sequence
satisfying Assumption C. Then almost surely
\begin{equation*}\lim_{n\rightarrow \infty}\frac{1}{n}{\rm Tr}\left(\frac{W_n^u}{\sqrt{n}}\right)^{2k}=\frac{k^k}{(k+1)!}. \end{equation*}
Hence the density of the LSD is given by $|x|\psi(x^2)$ where $\psi$ satisfies \eqref{eq:densityequation} and its support is contained in $[-\sqrt e,\ \sqrt e]$.
\end{theorem}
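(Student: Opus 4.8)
The plan is to use the combinatorial reduction already established and to evaluate exactly the resulting sum of volumes. By Theorem~\ref{theorem:convergence} and the triangular Wigner analysis, the LSD of $W_n^u/\sqrt n$ exists almost surely, is symmetric, has vanishing odd moments, and satisfies $\beta_{2k}=\sum_{w}p_u(w)$, the sum running over Catalan words $w$ of length $2k$, with $p_u(w)=\int_{[0,1]^{k+1}}I_W(v_S)\,dv_S$. The almost sure convergence of $\frac1n\Tr((W_n^u/\sqrt n)^{2k})$ to $\beta_{2k}$ is already delivered by Conditions M1 and M4, so the whole task reduces to proving $\beta_{2k}=k^k/(k+1)!$.

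First I would reinterpret each $p_u(w)$ as a tree volume. A Catalan word of length $2k$ is the depth--first contour of a rooted plane tree $T$ with $k$ edges; the generating vertices are exactly the $k+1$ vertices of $T$, and each constraint $v_{\phi(i-1)}+v_{\phi(i)}\le1$ is precisely the parent--child inequality $v_u+v_v\le1$ on a tree edge. Hence
\begin{equation*}
p_u(w)=\mathrm{Vol}(T):=\int_{[0,1]^{k+1}}\prod_{\{u,v\}\in E(T)}I(v_u+v_v\le1)\,dv,
\end{equation*}
which depends only on the abstract tree; so $\beta_{2k}=\sum_{T}\mathrm{Vol}(T)$ over plane trees with $k$ edges. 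This already explains the table: for instance the two words with $p_u=1/4$ are the two plane trees isomorphic to the star $K_{1,3}$, while the three words with $p_u=5/24$ are the plane trees isomorphic to the path on four vertices.

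Next I would package the volumes into a generating function. Writing $F_T(x)$ for the same integral with the root value fixed at $x$, and $\Phi(x,z)=\sum_T F_T(x)z^{|E(T)|}$, the plane--tree decomposition (a root followed by an ordered list of subtrees, each attached by an edge that forces the subtree root into $[0,1-x]$) gives the Catalan fixed point $\Phi(x,z)=\big(1-z\int_0^{1-x}\Phi(y,z)\,dy\big)^{-1}$, and the target is $M(z):=\int_0^1\Phi(x,z)\,dx=\sum_k\beta_{2k}z^k$. The crux is solving this equation. Setting $R(x)=\int_0^x\Phi(y,z)\,dy$, differentiation shows that $R(x)+R(1-x)-zR(x)R(1-x)$ is constant in $x$, hence equal to $M=R(1)$; eliminating $R(1-x)$ turns the fixed point into the linear ODE $R'(x)=\big(1-zR(x)\big)/(1-zM)$ with $R(0)=0$, solved by $R(x)=\tfrac1z(1-e^{-\alpha x})$ with $\alpha=z/(1-zM)$. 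Imposing $R(1)=M$ gives $1-zM=e^{-z/(1-zM)}$; the substitution $1-zM=e^{-s}$ reduces this to the tree--function relation $z=se^{-s}$ and yields $M(z)=(e^{s}-1)/s=\sum_{m\ge0}s^m/(m+1)!$.

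Finally I would extract coefficients. Lagrange inversion for $s=\mathcal T(z)$, $z=se^{-s}$, gives $[z^k]s^m=\frac{m}{k}\frac{k^{k-m}}{(k-m)!}$, whence $\beta_{2k}=\frac1k\sum_{m=1}^{k}\frac{m\,k^{k-m}}{(m+1)!\,(k-m)!}$; multiplying by $(k+1)!$ and using $\frac{(k+1)!}{(m+1)!(k-m)!}=\binom{k+1}{m+1}$ reduces the claim to the binomial identity $\sum_{m=1}^{k}m\binom{k+1}{m+1}k^{k-m}=k^{k+1}$, which follows from the two evaluations $\sum_j\binom{k+1}{j}k^{k+1-j}=\sum_j j\binom{k+1}{j}k^{k+1-j}=(k+1)^{k+1}$ after removing the $j=0,1$ terms. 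This establishes $\beta_{2k}=k^k/(k+1)!$. Since $\{k^k/(k+1)!\}$ are exactly the moments of the measure $\nu$ with density $\psi$ on $[0,e]$ recorded in \eqref{eq:triangular moments}--\eqref{eq:densityequation}, the symmetric limit law is the pushforward of $\nu$ under $y\mapsto\pm\sqrt y$, so a change of variables gives density $|x|\psi(x^2)$ with support contained in $[-\sqrt e,\sqrt e]$. I expect the main obstacle to be the exact solution of the functional equation for $\Phi$: the constraint couples $x$ with $1-x$, and the decisive step is spotting the conserved quantity $R(x)+R(1-x)-zR(x)R(1-x)$ that linearizes the ODE and exposes the tree function; everything downstream is routine.
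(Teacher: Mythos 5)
Your proposal is correct, and it shares the paper's skeleton only up to the recursion: like the paper, you reduce $\beta_{2k}$ to a sum of volumes $p_u(w)$ over Catalan words and exploit the first‑return decomposition $w=aw_1aw_2$, which in the paper appears as Lemma 3.2 and the recursion $G_{2n}(x)=\sum_{k=1}^n G_{2(n-k)}(x)\int_0^{1-x}G_{2(k-1)}(y)\,dy$ and in your write‑up as the fixed point $\Phi=\bigl(1-z\int_0^{1-x}\Phi\bigr)^{-1}$ for $\Phi(x,z)=\sum_n G_{2n}(x)z^n$ — these are the same identity. From there the two arguments genuinely diverge. The paper guesses the closed form $G_{2n}(x)=(1-x)(n+1-x)^{n-1}/n!$ and verifies it by induction using Abel's binomial identity $p_n(x+y)=\sum_k\binom{n}{k}p_k(x)p_{n-k}(y)$, then integrates once more to get $k^k/(k+1)!$. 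You instead solve the functional equation exactly: the observation that $R(x)+R(1-x)-zR(x)R(1-x)$ has zero $x$-derivative (which follows at once from $R'(x)(1-zR(1-x))=1$ and its reflection) linearizes the problem, yields $1-zM=e^{-z/(1-zM)}$, and puts $M$ in the orbit of the tree function $z=se^{-s}$; Lagrange inversion plus an elementary binomial identity (which I checked: both weighted sums equal $(k+1)^{k+1}$, leaving exactly $k^{k+1}$) then gives the coefficients. All steps are legitimate as formal-power-series manipulations since each $G_{2n}$ is a polynomial, and the functional equation determines $M$ uniquely. What each approach buys: the paper's route produces the explicit per-word-length polynomial $G_{2n}(x)$, which is more refined information, but requires knowing the answer in advance and invoking Abel polynomials; your route needs no guess, derives the answer constructively, and makes the appearance of $k^k$ transparent via the tree function and the plane-tree interpretation of $p_u(w)$ (which also neatly explains the coincidences in Table 1). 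The final passage from the moment sequence to the density $|x|\psi(x^2)$ on $[-\sqrt e,\sqrt e]$ is handled identically in both (compact support gives moment determinacy, and the symmetric square-root pushforward of $\nu$ has that density).
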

Note that since the LSD is universal, the above limit moments are also the moments  of the LSD
under Assumption A.
\begin{remark}
 Interestingly, the above moment sequence is directly related to the Lambert $W$ function, which is defined through the following functional equation
\begin{equation*}
 W(z)\exp(W(z))=z.
\end{equation*}
If $x$ is real then the above equation has two possible branches and the one satisfying $W(x)\geq -1$ is called the
principal branch and is denoted by $W_0$. The principal branch $W_0$ is analytic at zero and one can use Lagrange's inversion
theorem to see that,
$$1+\frac{1}{xW_0(1/x)}= \sum_{k=0}^{\infty}\frac{k^k}{(k+1)!}x^{-(k+1)}$$ for $x$ lying outside $(-e,e)$.
For details we refer the reader to the exciting article on Lambert $W$ function by \citet{corless:1996}.
\end{remark}
\begin{proof}
To begin with, we need the following the simple lemma whose
proof is omitted.
\begin{lemma}
\label{circuit1} (i) Let $w$ be a Catalan word of length $2k$. Let
$\pi$ be a  function $\pi: \{0,1,...,k\}\rightarrow \{1,2,...,n\}$
such that $w[i]=w[j] \Rightarrow
(\pi(i-1),\pi(i))=(\pi(j),\pi(j-1))$. Then $\pi(0)=\pi(2k)$ and
hence $\pi \in \Pi^*(w)$.

(ii) Let $\phi(j)$ be as in Definition~\ref{def:phi}. If $w=w_1w_2$ where $w_1$ and $w_2$ are Catalan words of length
$2k_1$ and $2k_2$ respectively. Then for every vertex $i>2k_1$,
$\phi(i)\geq 2k_1$.
\end{lemma}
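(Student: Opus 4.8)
The plan is to prove the two parts separately, each by an induction that peels off the recursive structure of Catalan words; I read the domain in (i) as $\{0,1,\dots,2k\}$, consistent with the conclusion $\pi(0)=\pi(2k)$. For (i) I would induct on $k$, the base case $w=aa$ being immediate since $w[1]=w[2]$ forces $(\pi(0),\pi(1))=(\pi(2),\pi(1))$, i.e. $\pi(0)=\pi(2)$. For the inductive step, recall that any nonempty Catalan word contains an adjacent repeated pair, i.e. an index $m$ with $w[m]=w[m+1]$, and that deleting this pair produces a Catalan word $w'$ of length $2k-2$. The matching hypothesis at positions $m,m+1$ reads $(\pi(m-1),\pi(m))=(\pi(m+1),\pi(m))$, hence $\pi(m-1)=\pi(m+1)$.

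I would then define $\pi'$ on $\{0,\dots,2k-2\}$ by excising the vertices $m,m+1$ and gluing along $\pi(m-1)=\pi(m+1)$: set $\pi'(t)=\pi(t)$ for $t\le m-1$ and $\pi'(t)=\pi(t+2)$ for $t\ge m-1$, which is well defined at $t=m-1$. The letter removed occurs nowhere else in $w$, and every surviving edge of $\pi'$ coincides with the corresponding edge of $\pi$ (including the spliced edge $(\pi'(m-1),\pi'(m))=(\pi(m+1),\pi(m+2))$), so $\pi'$ satisfies the same opposite-orientation matching for $w'$. By the induction hypothesis $\pi'(0)=\pi'(2k-2)$, that is $\pi(0)=\pi(2k)$. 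Finally, the hypothesis $(\pi(i-1),\pi(i))=(\pi(j),\pi(j-1))$ gives $\{\pi(i-1),\pi(i)\}=\{\pi(j-1),\pi(j)\}$ for each matched pair, so the Wigner link values $L_W$ agree and, together with $\pi(0)=\pi(2k)$, yield $\pi\in\Pi^*(w)$.

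For (ii) the key structural input is that $w_1$ and $w_2$, being each pair-matched, use disjoint alphabets; hence every letter occurring in the block $w_2$ has both of its occurrences among the positions $2k_1+1,\dots,2k$. I would trace the chain implicit in Definition~\ref{def:phi}. Fix a vertex $i>2k_1$ and argue by strong induction on $i$. If $i\in S$ then $\phi(i)=i>2k_1$. If $i\notin S$, let $p$ be the index of the first occurrence of the letter $w[i]$; since $w[i]$ is a $w_2$-letter, $p\ge 2k_1+1$, and Lemma~\ref{catalan3} gives $\pi(i)=\pi(p-1)$, so $\phi(i)=\phi(p-1)$ with $p-1\ge 2k_1$. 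When $p-1>2k_1$ this is a second-block vertex below $i$ and the induction hypothesis applies.

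The delicate point --- and the one I expect to be the only real obstacle --- is the junction vertex $2k_1$, reached when $p-1=2k_1$. Because $w[2k_1]$ is necessarily a second occurrence (its partner cannot lie beyond $w_1$), we have $2k_1\notin S$, so the chain does not terminate there; instead, applying part (i) to the Catalan block $w_1$ yields $\pi(2k_1)=\pi(0)$, so the generating terminus of the chain is $0$. Thus for every $i>2k_1$ the chain never visits the interior indices $\{1,\dots,2k_1-1\}$, and $\phi(i)$ is either a generating vertex $\ge 2k_1$ or the base point $0$. Reading the statement under the identification of the two block base points $\pi(0)=\pi(2k_1)$ (equivalently, the shared variable $v_0=v_{2k_1}$ common to both Catalan blocks), this is exactly $\phi(i)\ge 2k_1$. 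Verifying that the chain stays weakly above $2k_1$ at every step, and disposing of this single junction cleanly, is where the care is needed.
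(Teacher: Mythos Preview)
The paper omits the proof of this lemma entirely (calling it ``simple''), and later in the appendix restates part (i) separately with the same comment (``easy by induction''), so there is no argument to compare against. Your induction on $k$ for part (i), peeling off an adjacent double letter and splicing the circuit, is correct and is the natural approach; your reading of the domain as $\{0,1,\dots,2k\}$ is clearly the intended one.

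For part (ii) your analysis is not only correct but sharper than the paper's statement. You prove that for $i>2k_1$ one has $\phi(i)\in\{0\}\cup\{j:j>2k_1\}$, and you rightly flag that the literal inequality $\phi(i)\ge 2k_1$ can fail: already for $w=aabb$ (so $k_1=1$) one has $\phi(4)=0<2$. Your resolution via the identification $\pi(0)=\pi(2k_1)$ is exactly how the lemma is actually used downstream: in the proof that $Q_w=Q_{w_1}Q_{w_2}$, the paper first invokes part (i) to obtain $\phi(2k_1)=0$ and then part (ii) to conclude that the two blocks of inequalities share no variable except $v_0$. So what you have established is precisely the statement the paper needs, and your identification of the junction vertex $2k_1$ as the only delicate point is on target.
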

Now let $w$ be a Catalan word of
length $2k$. Let $S$ be the set of generating vertices for $w$. It
has already been shown that (see (\ref{new1}))
\begin{eqnarray} \label{new2}
p_u(w)=\idotsint\limits_{[0,1]^{k+1}} I(
v_{\phi(i-1)}+v_{\phi(i)}\leq 1, i\in S-\{0\})dv_S.
\end{eqnarray}
This integral can be evaluated as an iterated integral. It is clear
that integrating out all the variables other that $v_0$ leaves a
polynomial in $v_0$, say $Q_w$. It follows that
\begin{equation}
p_u(w)=\int_0^1 Q_w(v_0)dv_0
\end{equation}
for some \textit{polynomial} $ Q_w(\cdot)$.  Let us illustrate with
a few examples.
 Let $w=aa$. Then
 \begin{equation*}p_u(w)=\int_{v_0+v_1\leq 1}dv_0dv_1=\int_0^1(1-v_0)dv_0\end{equation*} and hence
 \begin{equation*}Q_{aa}(x)=1-x.\end{equation*}
Now let $w=abba$. Then
   \begin{eqnarray*}
   p_u(w)&=&\idotsint\limits_{[0,1]^3}
I(v_0+v_1\leq 1, v_1+v_2 \leq 1)dv_0dv_1dv_2\\
&=&\int_0^1\int_0^{1-v_0}\int_0^{1-v_1}dv_2dv_1dv_0\\
&=&\int_0^1\int_0^{1-v_0}(1-v_1)dv_1dv_0
=\int_0^1\frac{(1-v_0^2)}{2}dv_0.
   \end{eqnarray*}
Hence
\begin{equation*}Q_{abba}(x)=\frac{1-x^2}{2}.\end{equation*}
The following two lemmas collect the required properties of $Q_w(\cdot)$.
\begin{lemma}
(i) Let $w=w_1w_2$ be a Catalan word of length $2k$ where $w_1$ and $w_2$ are both Catalan. Then $Q_w(x)=Q_{w_1}(x)Q_{w_2}(x)$.\\

\noindent (ii)
Let $w=aw_1a$ be a Catalan word with $w_1$ Catalan. Then
\begin{equation*}Q_w(x)=\int_0^{1-x}Q_{w_1}(y)dy.\end{equation*}
\end{lemma}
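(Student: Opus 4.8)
The statement is a structural recursion for the polynomials $Q_w$ arising from the integral representation \eqref{new2}. I would prove the two parts separately, both by direct manipulation of the iterated integral defining $p_u(w)$, exploiting the combinatorial structure of the generating vertices and the decomposition of Catalan words. The key tool throughout is Lemma~\ref{circuit1}(ii), which controls how the maps $\phi(\cdot)$ interact with a concatenation $w = w_1 w_2$.

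\textbf{Part (i): the product formula.} Suppose $w = w_1 w_2$ with $w_1, w_2$ both Catalan, of lengths $2k_1$ and $2k_2$. The set of generating vertices $S$ splits accordingly: the vertices of $w_1$ (including $0$) contribute generating vertices among $\{0,1,\dots,2k_1\}$, and the vertices of $w_2$ contribute the rest. The crucial point I would use is that, because $w_1$ is Catalan, we have $\pi(0) = \pi(2k_1)$ for every $\pi \in \Pi^*(w_1)$; equivalently $v_0 = v_{2k_1}$ after normalization, so $\phi(2k_1) = 0$. By Lemma~\ref{circuit1}(ii), every non-generating vertex $i > 2k_1$ satisfies $\phi(i) \ge 2k_1$, so the constraints coming from the second block $w_2$ involve only the variables $v_0 = v_{2k_1}$ and the generating vertices internal to $w_2$, while the constraints from $w_1$ involve only $v_0$ and the generating vertices internal to $w_1$. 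The only shared variable between the two blocks is $v_0$. Therefore, when I write $p_u(w)$ as an iterated integral and integrate out all variables except $v_0$, the integrand factorizes: the $w_1$-constraints produce exactly $Q_{w_1}(v_0)$ and the $w_2$-constraints produce $Q_{w_2}(v_0)$. This gives $Q_w(x) = Q_{w_1}(x) Q_{w_2}(x)$ pointwise in $x = v_0$. The main thing to check carefully here is that the starting point of the second block is pinned to $v_0$ (not to some other vertex), which is precisely what $\pi(0) = \pi(2k_1)$ guarantees.

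\textbf{Part (ii): the "wrapping" formula.} Now let $w = a w_1 a$ with $w_1$ Catalan of length $2k-2$. The outer letter $a$ occupies positions $1$ and $2k$, so it is generating at position $1$ and matched at position $2k$; the inner word $w_1$ occupies positions $2, \dots, 2k-1$. The constraint attached to the first $a$ is $v_0 + v_1 \le 1$, and the constraint attached to the closing $a$ is automatically the same equation (by the pair-match), so it imposes nothing new. Since $a$ is generating at position $1$, the variable $v_1$ is a fresh coordinate; moreover $w_1$, read on positions $2,\dots,2k-1$, is a Catalan word whose "base point" is $v_1$ (its initial vertex), so integrating out its internal variables subject to the $w_1$-constraints yields $Q_{w_1}(v_1)$ as a function of that base point. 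The outer constraint restricts $v_1$ to the range $0 \le v_1 \le 1 - v_0$. Integrating over $v_1$ gives
\begin{equation*}
Q_w(x) = \int_0^{1-x} Q_{w_1}(y)\, dy,
\end{equation*}
exactly as claimed, where $x = v_0$ and $y = v_1$.

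\textbf{Main obstacle.} The delicate part in both cases is bookkeeping the roles of the generating vertices and making the change of base point rigorous: I must verify that the polynomial $Q_{w_1}$ obtained by integrating out the internal variables of $w_1$ genuinely depends only on the single "entry" variable ($v_0$ in part (i), $v_1$ in part (ii)) and that no residual constraint links the two blocks beyond that variable. This is where Lemma~\ref{circuit1} does the heavy lifting, and I would state the factorization of $S$ and the support of each $\phi(i)$ explicitly before collapsing the iterated integral. Once the variables are correctly separated, the factorization in (i) and the single outer integration in (ii) are immediate.
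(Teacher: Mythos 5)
Your proposal is correct and follows essentially the same route as the paper: both parts rest on the same decomposition of the constraint set via Lemma~\ref{circuit1} (in particular $\phi(2k_1)=0$ and the separation of variables across the two blocks except for the shared base point), followed by factorizing the iterated integral in (i) and performing the single outer integration over $v_1\in[0,1-v_0]$ in (ii). No substantive difference from the paper's argument.
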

\begin{proof}
(i) Let $w_1$ and $w_2$ be Catalan words of length $2k_1$ and $2k_2$
respectively.
We divide the set of inequalities in the indicator function in
(\ref{new2}) above into two classes:
\begin{enumerate}
\item
$v_{\phi(i-1)}+v_{\phi(i)}\leq 1, i\in S-\{0\}, i\leq 2k_1$; i.e., the inequalities corresponding to generating vertices in $w_1$.

\item
$v_{\phi(i-1)}+v_{\phi(i)}\leq 1, i\in S-\{0\}, i> 2k_1$; i.e., the inequalities corresponding to generating vertices in $w_2$.
\end{enumerate}
From Lemma \ref{circuit1} (i) it follows that $\phi(2k_1)=0$ and
from Lemma \ref{circuit1} (ii) it follows that the inequalities  in items (1)
and (2) do not have any common variable except $v_0$. Hence in
(\ref{new2}) integrating w.r.t the variables corresponding to the
generating vertices in $w_1$ we get $Q_{w_1}(v_0)$ and integrating
w.r.t. the variables corresponding to the generating vertices in
$w_2$, we get $Q_{w_2}(v_0).$ The result now follows from the
definition of $Q_w.$\\

\noindent (ii) The proof of this  is similar  and we only give a sketch. Note that
the variable $v_0$ does not occur anywhere in the equations
corresponding to the generating vertices in $w_1$. Integrating
w.r.t. all these variables (other than $v_1$), we get
$Q_{w_1}(v_1).$ Hence the last step of evaluating the iterated
integral is by (\ref{new2})
\begin{eqnarray*}
p_u(w)&=&\int_{v_0+v_1\leq 1}Q_{w_1}(v_1)dv_0dv_1
=\int_0^1\left[\int_0^{1-v_0}Q_{w_1}(v_1)dv_1\right]dv_0,
\end{eqnarray*}
and the result follows.
\end{proof}
Let
\begin{equation}\label{eq:pg} G_0(x)=1\ {\rm and } \ \
G_{2n}(x) =
\displaystyle\sum_{\substack{w \ {\rm Catalan,} \\
|w|=n}}Q_w(x). \end{equation}
\begin{lemma}\label{lem:cat:rec}
With
$\{G_{2n}, n \geq 0\}$ as in~\eqref{eq:pg},\\
(i) for $n\geq 1$ we have
\begin{equation*}G_{2n}(x)=\displaystyle\sum_{k=1}^n
G_{2(n-k)}(x)\displaystyle\int_0^{1-x}
G_{2(k-1)}(y)dy.\end{equation*} (ii)
\begin{equation*}G_{2n}(x)=\displaystyle\frac{(1-x)(n+1-x)^{n-1}}{n!}\end{equation*}
for all $n\geq 0$.
\end{lemma}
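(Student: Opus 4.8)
The plan is to prove part (i) first as a direct consequence of the two structural properties of $Q_w$ established in the previous lemma, and then to deduce part (ii) by verifying that the explicit formula satisfies the recursion in (i) together with the initial condition $G_0(x)=1$. For part (i), I would start from the standard decomposition of Catalan words: every nonempty Catalan word $w$ of length $2n$ has a unique first-return decomposition $w = a\,w'\,a\,w''$, where the letter $a$ opening $w$ is matched at position $2k$ for some $1\le k\le n$, $w'$ is a Catalan word of length $2(k-1)$, and $w''$ is a Catalan word of length $2(n-k)$. Using part (ii) of the preceding lemma (with inner word $w'$) gives $Q_{aw'a}(x)=\int_0^{1-x}Q_{w'}(y)\,dy$, and then part (i) of that lemma (applied to the concatenation of $aw'a$ with $w''$) gives $Q_w(x)=Q_{aw'a}(x)\,Q_{w''}(x)=\big(\int_0^{1-x}Q_{w'}(y)\,dy\big)Q_{w''}(x)$. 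Summing over all Catalan words $w$ of length $2n$ amounts to summing independently over the splitting index $k$, over all Catalan $w'$ of length $2(k-1)$, and over all Catalan $w''$ of length $2(n-k)$; since the integral is linear in $Q_{w'}$, this factorizes exactly into $\sum_{k=1}^n\big(\int_0^{1-x}G_{2(k-1)}(y)\,dy\big)G_{2(n-k)}(x)$, which is the claimed recursion.

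For part (ii), I would proceed by strong induction on $n$, checking first that $G_0(x)=1$ matches the formula $(1-x)(1-x)^{-1}/0!$ (reading $(n+1-x)^{n-1}$ at $n=0$ as $(1-x)^{-1}$, so that the stated formula must be verified to be consistent at the base case, or alternatively taking $n=1$ as the base case where $G_2(x)=Q_{aa}(x)=1-x$ agrees with $(1-x)(2-x)^0/1!$). Assuming the formula for all indices below $n$, I substitute it into the right-hand side of the recursion from part (i). The inner integral becomes $\int_0^{1-x}\frac{(1-y)(k-x-\,(\text{shift}))^{\,k-2}}{(k-1)!}\,dy$ after writing $G_{2(k-1)}(y)=\frac{(1-y)(k-y)^{k-2}}{(k-1)!}$; this integral evaluates in closed form, and the remaining sum over $k$ must be shown to telescope or collapse to $\frac{(1-x)(n+1-x)^{n-1}}{n!}$.

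The main obstacle will be the algebraic identity needed at the induction step: after carrying out the inner integration one is left with a sum $\sum_{k=1}^n\frac{(n-k+1-x)^{n-k-1}}{(n-k)!}\cdot(\text{integrated factor in }k,x)$, and showing this equals $\frac{(1-x)(n+1-x)^{n-1}}{n!}$ is essentially an Abel-type binomial identity for the polynomials $(a-x)^{m}$. The cleanest route is probably to recognize the generating function: the sequence $\frac{(n+1-x)^{n-1}}{n!}$ is, up to the factor $(1-x)$, a shifted tree-function / Lambert $W$ expansion, and the recursion in (i) is exactly the convolution identity satisfied by such tree functions. I expect the verification to reduce to Abel's summation identity $\sum_{k=0}^{n}\binom{n}{k}(u+k)^{k-1}(v+n-k)^{n-k}=\big(\tfrac{1}{u}+\tfrac{1}{v}\big)(u+v+n)^{n-1}\cdot u$, applied with suitable substitutions $u=1-x$ and an appropriate $v$; the bookkeeping of shifted arguments $(k-x)$ versus $(n+1-x)$ is where the care is needed, but no genuinely new idea is required beyond this classical identity.
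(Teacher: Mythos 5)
Your proposal follows essentially the same route as the paper: part (i) via the first-return decomposition $w=aw_1aw_2$ combined with the two properties of $Q_w$ from the preceding lemma, and part (ii) by induction, evaluating the inner integral in closed form (it equals $\frac{(1-x)(k-1+x)^{k-1}}{k!}$) and collapsing the resulting sum with the classical Abel convolution identity $p_n(x+y)=\sum_{k=0}^n\binom{n}{k}p_k(x)p_{n-k}(y)$ for $p_n(x)=x(x-n)^{n-1}$. The plan is correct and the remaining work is only the bookkeeping you already anticipate.
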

\begin{proof}
(i) For $n=1$, the only Catalan word of length $2$ is $aa$. Hence
$G_2(x)=Q_{aa}(x)=1-x$ and the result is true for $n=1$. Let $n\geq
2$ and let $G_{2n,k}(x)$ be the sum of $Q_w(x)$ over all Catalan
words $w$ such that the first letter is repeated at the $2k$-th
place. Clearly, for such a $w$, $w=aw_1aw_2$ where $w_1$ is a
Catalan word of length $2(k-1)$ and $w_2$ is a Catalan word of
length $2(n-k)$. Using the previous lemma, it follows that
\begin{eqnarray*}
G_{2n,k}(x)&=&\sum_{|w_1|=(k-1),|w_2|=(n-k)}Q_{aw_1aw_2}(x)\\
&=&\sum_{|w_1|=(k-1),|w_2|=(n-k)}Q_{aw_1a}(x)Q_{w_2}(x)\\
&=& \sum_{|w_1|=(k-1)}Q_{aw_1a}(x)\sum_{|w_2|=(n-k)}Q_{w_2}(x)\\
&=& \sum_{|w_1|=(k-1)}\int_0^{1-x}Q_{w_1}(y)dy \sum_{|w_2|=(n-k)}Q_{w_2}(x)\\
&=& \sum_{|w_2|=(n-k)}Q_{w_2}(x) \int_0^{1-x}(\sum_{|w_1|=(k-1)}Q_{w_1}(y))dy\\
&=& G_{2(n-k)}(x)\int_0^{1-x}G_{2(k-1)}(y)dy.
\end{eqnarray*}
 As $G_{2n}(x)=\sum_{k=1}^n G_{2n,k}(x)$, part (i) follows.\\


\noindent (ii) We prove this by induction. The cases $n=0,1$ are
clear. Now
suppose that the result is true for all $j<n$.
Then by the last lemma and the induction hypothesis we have
\begin{eqnarray*}
G_{2n}(x)&=& \sum_{k=1}^n G_{2(n-k)}(x)\int_0^{1-x} G_{2(k-1)}(y)dy\\
&=& \sum_{k=1}^n\frac{(1-x)(n-k+1-x)^{n-k-1}}{(n-k)!}\int_0^{1-x} \frac{(1-y)(k-y)^{k-2}}{(k-1)!}dy\\
&=&\sum_{k=1}^n\frac{(1-x)(n-k+1-x)^{n-k-1}}{(n-k)!}\frac{(1-x)(k-1+x)^{k-1}}{k!}\\
&=& \frac{(-1)^{n-1}}{n!}\sum_{k=1}^n \binom n k p_k(z)p_{n-k}(-z),
\end{eqnarray*}
where $z=1-x$ and
\begin{equation*}p_n(x)=x(x-n)^{n-1}\end{equation*} is the Abel Polynomial
of degree $n$. It is a well known fact (see \citet{rio}) that Abel polynomials satisfy the following
combinatorial identity:
\begin{equation*}p_n(x+y)=\sum_{k=0}^n \binom n k p_k(x)p_{n-k}(y).\end{equation*}
It follows that
\begin{eqnarray*}
G_{2n}(x) &=& \frac{(-1)^{n-1}}{n!}\sum_{k=0}^n \binom n k p_k(z)p_{n-k}(-z)+\frac{(-1)^{n}}{n!}p_0(z)p_n(-z)\\
&=& \frac{(-1)^{n-1}}{n!}p_n(0)+\frac{(-1)^{n}}{n!}p_n(-z)\\
&=& \frac{(-1)^n(-z)(-z-n)^{n-1}}{n!}\\
&=& \frac{z(z+n)^{n-1}}{n!}\\
&=& \frac{(1-x)(n+1-x)^{n-1}}{n!}
\end{eqnarray*}
and the proof of Lemma~\ref{lem:cat:rec} is complete.
\end{proof}
Now we complete the proof of Theorem~\ref{theorem:moments}. Using the previous lemmas we have
\begin{eqnarray*} \lim_{n\rightarrow
\infty}\frac{1}{n} {\rm Tr}\big(\frac{W_n^u}{\sqrt{n}}\big)^{2k}&=&
\sum_{\substack{w~{\rm Catalan,}\\ |w|=k}}p_u(w)\\
&=& \int_0^1 G_{2k}(x)dx \ \ \ \ \ {\rm (see \ equation (\ref{eq:pg}))}\\
&=& \int_0^1\frac{(1-x)(k+1-x)^{k-1}}{k!}dx \ \ {\rm ( Lemma~\ref{lem:cat:rec}(ii))}\\
&=& \frac{k^k}{(k+1)!}.
\end{eqnarray*}
\end{proof}
\section{Some comments on other variants and joint convergence}\label{sec:comments}
\begin{enumerate}
\item It can be shown that the limit which appears in the triangular Wigner
matrix can appear in a variety of other matrices which satisfy the so called {\it Property P} introduced in~\cite{bana:bose}. The Property P states that
\begin{equation*}M^*=\sup_n\sup_{i,j\leq n}|\{1\leq k\leq n : k+i\leq n+1,\  k+j \leq n+1, \ L(k,i)=L(k,j)\}|<\infty.\end{equation*}
The proof of this fact is  similar to the proofs in
in~\cite{bana:bose} and hence we skip it.\\

\item The joint convergence of patterned random matrices was initiated in~\cite{bosehazrasaha} and was
further studied by~\cite{basu:bose:gang:hazra:2011a} for multiple
copies of different independent patterned random matrices. We can so
study the joint convergence of independent patterned triangular
matrices along the same lines. It can be shown that if the entries
have uniformly bounded moments and if $p_u(w)$ exists then
independent copies of patterned random matrices jointly converge
with respect to $\frac1n \Tr$ for any of the matrices considered in
this article. The behavior in the limit though, is a highly
non-trivial problem.\\

\item Let $W_{1,n}^u, W_{2,n}^u$ be two triangular  Wigner matrices having uniformly bounded moments. Let $(\{a_1, a_2\},\phi)$ denote the joint limit of
$(\{\frac{W_{1,n}^u}{\sqrt{n}},
\frac{W_{2,n}^u}{\sqrt{n}}\},\phi_n)$ where $\phi_n =\frac{1}{n}
{\rm E}[{\rm Tr}(\cdot)]$. Then $a_1$ and $a_2$ are not free. In
fact,
\begin{equation*}\phi(a_1^2)=\phi(a_2^2)=\lim_{n\rightarrow \infty}\frac{1}{n}{\rm
E}[{\rm Tr}(W_{1,n}^u/\sqrt{n})^2]=p_{u,W}(aa)=1/2.\end{equation*}
Also, from Table 1,
\begin{eqnarray*}
\phi(a_1^2a_2^2)&=&\lim_{n\rightarrow \infty}\frac{1}{n}{\rm E}[{\rm Tr}((W_{1,n}^u/\sqrt{n})^2(W_{2,n}^u/\sqrt{n})^2)]\\
&=&p_{u,W}(aabb)=1/3.
\end{eqnarray*}
As $\phi(a_1^2a_2^2)\neq \phi(a_1^2)\phi(a_2^2)$, $a_1$ and $a_2$
are not free.\\

\item Another variant of the triangular Wigner matrix is
\begin{equation*}
        W_n^l =
            \left[ \begin{array} {cccccc}
                0 & 0 & 0 & \ldots & 0 & x_{1n} \\
                0 & 0 & 0 & \ldots & x_{2(n-1)} & x_{2n} \\
                      &       &       & \vdots &       &   \\
                0 &x_{2(n-1)}& x_{3(n-1)}& \ldots  & x_{(n-1)(n-1)} & x_{(n-1)n} \\
                x_{1n} & x_{2n} & x_{3n} & \ldots & x_{(n-1)n} & x_{nn}
            \end{array} \right].
\end{equation*}
It should be noted that we can always delete the diagonal from the
above matrix without changing the limiting spectral distribution.
Let $P$ be the following orthogonal matrix
\begin{equation}\label{def:wigner}
        P =
            \left[ \begin{array} {cccccc}
                0 & 0 & 0 & \ldots & 0 & 1 \\
                0 & 0 & 0 & \ldots & 1 & 0 \\
                      &       &       & \vdots &       &   \\
                0 & 1 & 0 & \ldots  & 0 & 0 \\
                1 & 0 & 0 & \ldots & 0 & 0
            \end{array} \right].
\end{equation}
Then we have
\begin{equation}
        PW_n^uP' =
            \left[ \begin{array} {cccccc}
                0 & 0 & 0 & \ldots & 0 & x_{1n} \\
                0 & 0 & 0 & \ldots & x_{2(n-1)} & x_{1(n-1)} \\
                      &       &       & \vdots &       &   \\
                0 & x_{2(n-1)} & 0 & \ldots  & x_{22} & x_{12} \\
                x_{1n} & x_{1(n-1)} & x_{1(n-2)} & \ldots & x_{12} & x_{11}
            \end{array} \right]
\end{equation}
which can be seen to be equal to $W_n^l$ upon renaming of the
variables. Since conjugation by orthogonal matrices preserves
eigenvalues, we can conclude that  the limiting spectral
distribution of $W_n^l/\sqrt n$ is same as that of $W_n^u$.\\

\item Interestingly, $W_n^u/\sqrt n$ and $W_n^l/\sqrt n$ jointly converge.  If we call the limiting variables as
$a_1, a_2$, then again it can be shown that they are not free. But
it easily follows that $a_1+a_2$ obeys the
the semicircular law.\\

\item  As is well known, for the full version of the matrices, the contribution of every Catalan word to the limiting moment equals
one for all the common patterned matrices. Further, for the
Symmetric Circulant every word contributes one. For the Reverse
Circulant, only the so called symmetric words contribute, each
contributing one. For the Hankel matrices also, only the symmetric
words contribute but all of them do not contribute one or even
contribute equally. See for example~\cite{bosehazrasaha}.
 It is an interesting question to study the individual contribution of each word  to the limiting moment for the triangular matrices.
 This does not appear to be easy, even for the Wigner case.\\

\item The following facts on word contribution can be proved for the triangular Wigner matrices.
The proof is given in Section~\ref{section:appendix}.\vskip7pt

\noindent (i) For any word $w$ of length $2k$, $p_u(w)  \leq
1/(k+1)$.\vskip5pt

\noindent (ii) If $w$ is of the form $w= aabbcc\ldots$ then $p_u(w)
=1/(k+1)$.\vskip5pt

\noindent (iii) If $w$ and $w^\prime$ are Catalan words of the form
$w=aw_1aw_2$ and $w^\prime =aaw_1w_2$  then $p_u(w) \leq
p_u(w^\prime)$.\vskip5pt

\noindent
(iv) If $w$ and $w^\prime$ are Catalan words of the form $w = abbaw_1w_2$ and $w^\prime = abw_1 baw_2$ are Catalan words then
 $p_u (w) \geq p_u (w^\prime)$.\\


\item We wish to point out some histogram plots for the patterned triangular matrices. In Figures
\ref{fig:1} and \ref{fig:2}  we have plotted the histograms of the
empirical spectral distribution of the symmetric triangular Wigner,
Toeplitz, Hankel and Symmetric Circulant matrices.

It can be easily checked from the density equation given earlier in
\ref{eq:densityequation} that the density is unbounded at zero for
the triangular Wigner. Further, from the moment formula, one can
easily check that the support of the LSD is contained in
$[-\sqrt{e}, \ \sqrt{e}]$. We can see evidence of both these facts
in the histogram. It is known that the maximimum eigenvalue of the
full Wigner matrix converges almost surely to $2$. We believe that
likewise, for the triangular Wigner, the maximum eigenvalue
converges to  $\sqrt{e}$ almost surely.

As pointed out earlier, obtaining moment properties of the LSD for
the other three matrices does not seem to be easy. However, it
appears from the simulations that all the other three LSDs also have
densities and these are also unbounded at zero. Moreover all these
LSDs have unbounded support, just like for their full matrix
versions.

Interestingly, it is known that the LSD of the full Hankel is not
unimodal (simulations results show that it is bimodal but there is
no formal proof). However, the simulation evidence implies that the
triangular Hankel LSD is unimodal. Recall that for the full Hankel
matrix, the entries in the upper triangle and the entries in the
lower triangle (ignoring the main antidiagonal part) are independent
of each other. One wonders whether this particular nature has
anything to do with the bimodality of the LSD of the full Hankel.\\

\item
Now suppose we consider triangular patterned matrices but we drop
the assumption of symmetry. Let $A_n$ be any such matrix. It can be
shown by the moment method that the LSD  of the symmetric matrix
$A_nA_n^\prime/n$ exists. A natural (and hard) question is what
happens to the LSD for the asymmetric matrix $A_n/\sqrt{n}$. The
moment method cannot be directly applied and one has to resort to
the Stieltjes transform.\\

\item Let $X^{(s)}$ be a symmetric matrix with link function $L$ and
let $X$ be it asymmetric version as defined
in~\cite{bose:sreela:sen:2010}. A general question that was raised
there was the following. Suppose $X^{(s)}/\sqrt n$ has an LSD
identified by a random variable say $X$. Suppose $XX'/n$ has an LSD
denoted by $Y$ say. When do $X^2$ and $Y$ have the same
distribution?

The answer is affirmative  when $X^{(s)}$ is a symmetric Wigner
matrix and $X$ is its asymmetric version (the matrix with i.i.d.\
entries) from the relation between Mar\v{c}enko-Pastur law and the
Wigner law. In~\cite{bose:sreela:sen:2010} it was shown that this is
true for the Toeplitz matrix but not for Hankel or Reverse Circulant
matrices.

The result in this article gives another important example in the
form of the triangular Wigner matrix.
\end{enumerate}

\begin{figure}[htp]
\centering
\includegraphics[height=70mm, width =70mm ]{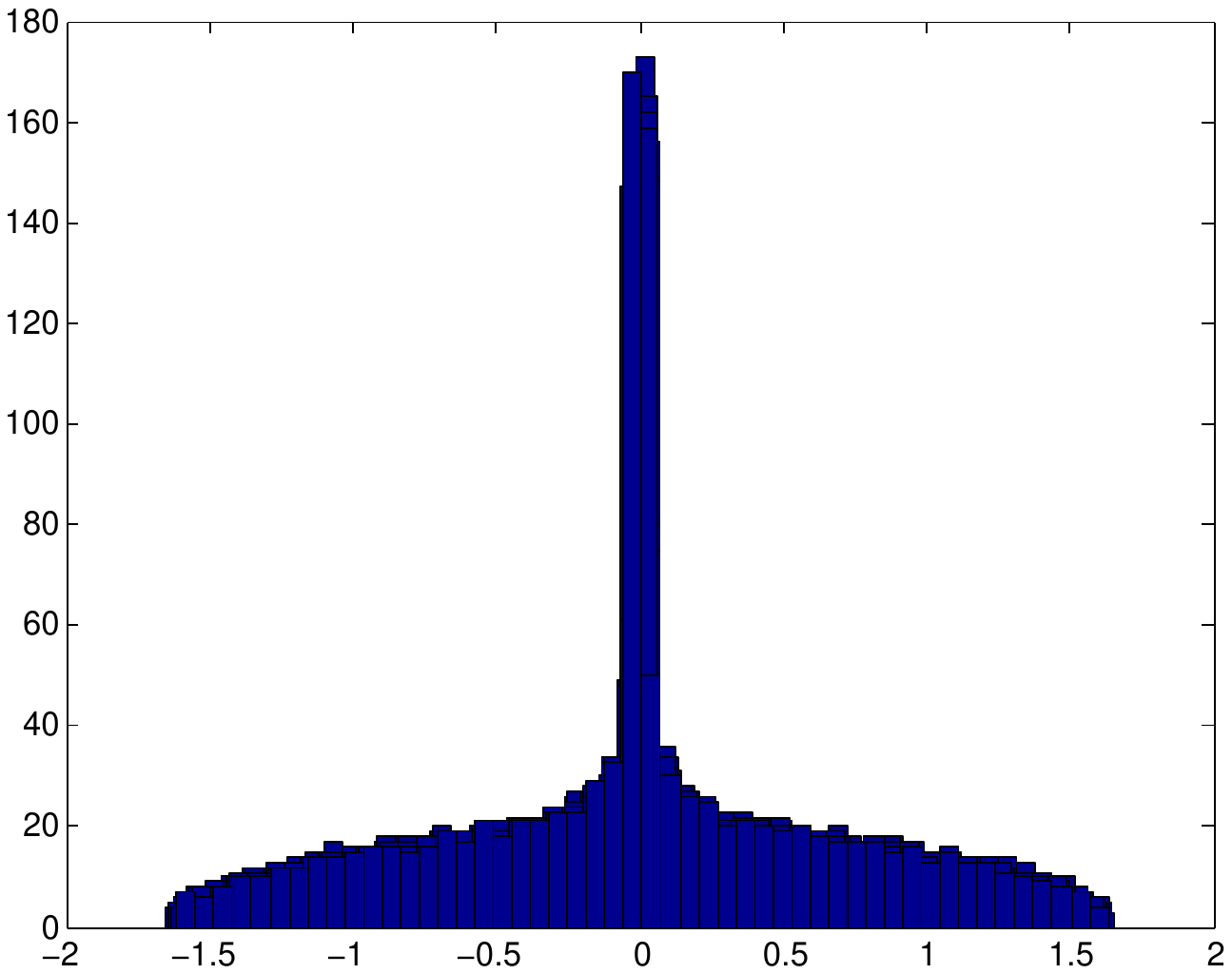}
\includegraphics[height=70mm, width =70mm ]{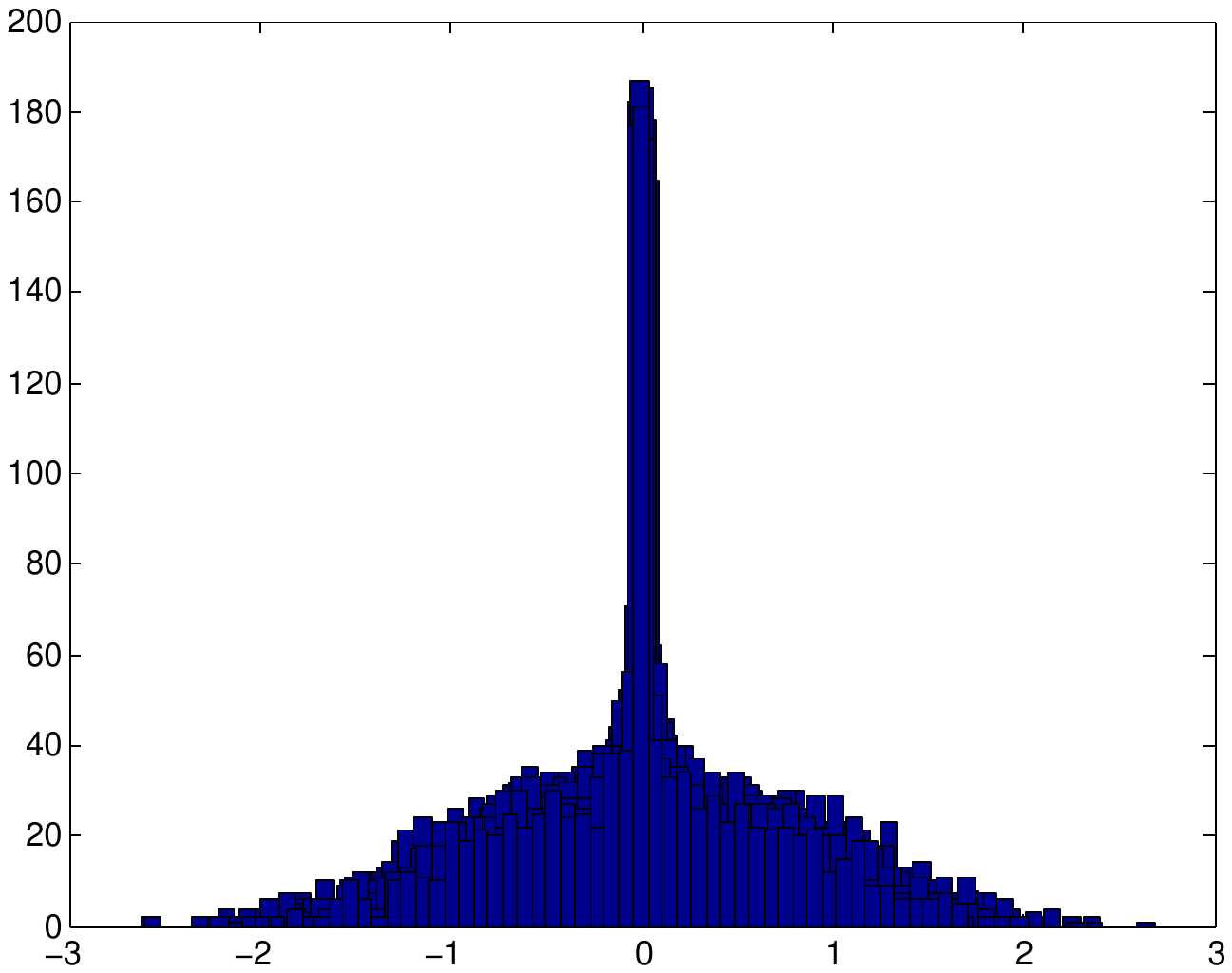}
\caption{\footnotesize  Histogram plots of empirical distribution of
triangular  matrices ($n=1000$) with entries $N(0,1)$ of (i) Wigner
(left) (ii) Toeplitz (right)}
\label{fig:1}
\end{figure}
\begin{figure}[htp]
\centering
\includegraphics[height=70mm, width =70mm ]{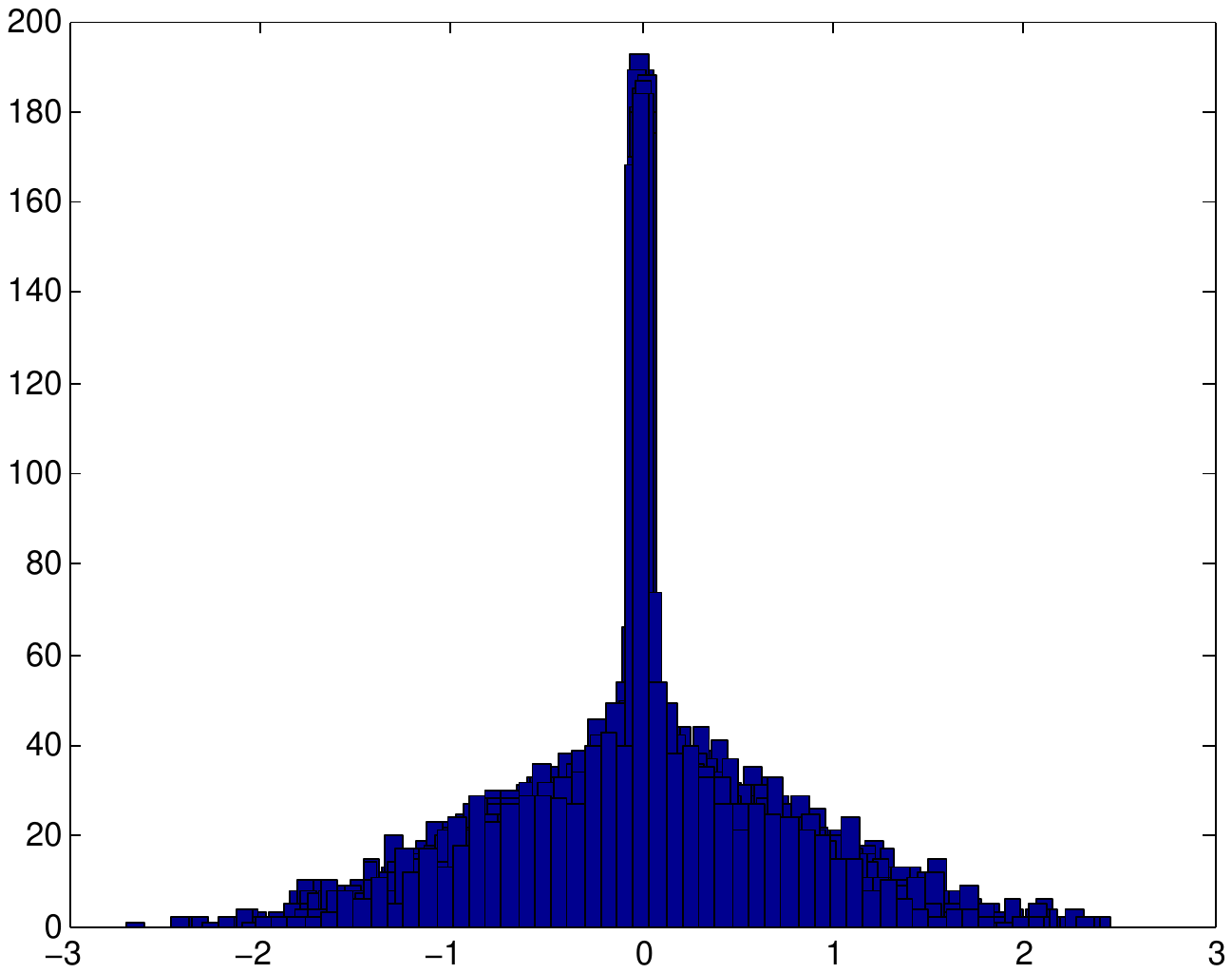}
\includegraphics[height=70mm, width =70mm ]{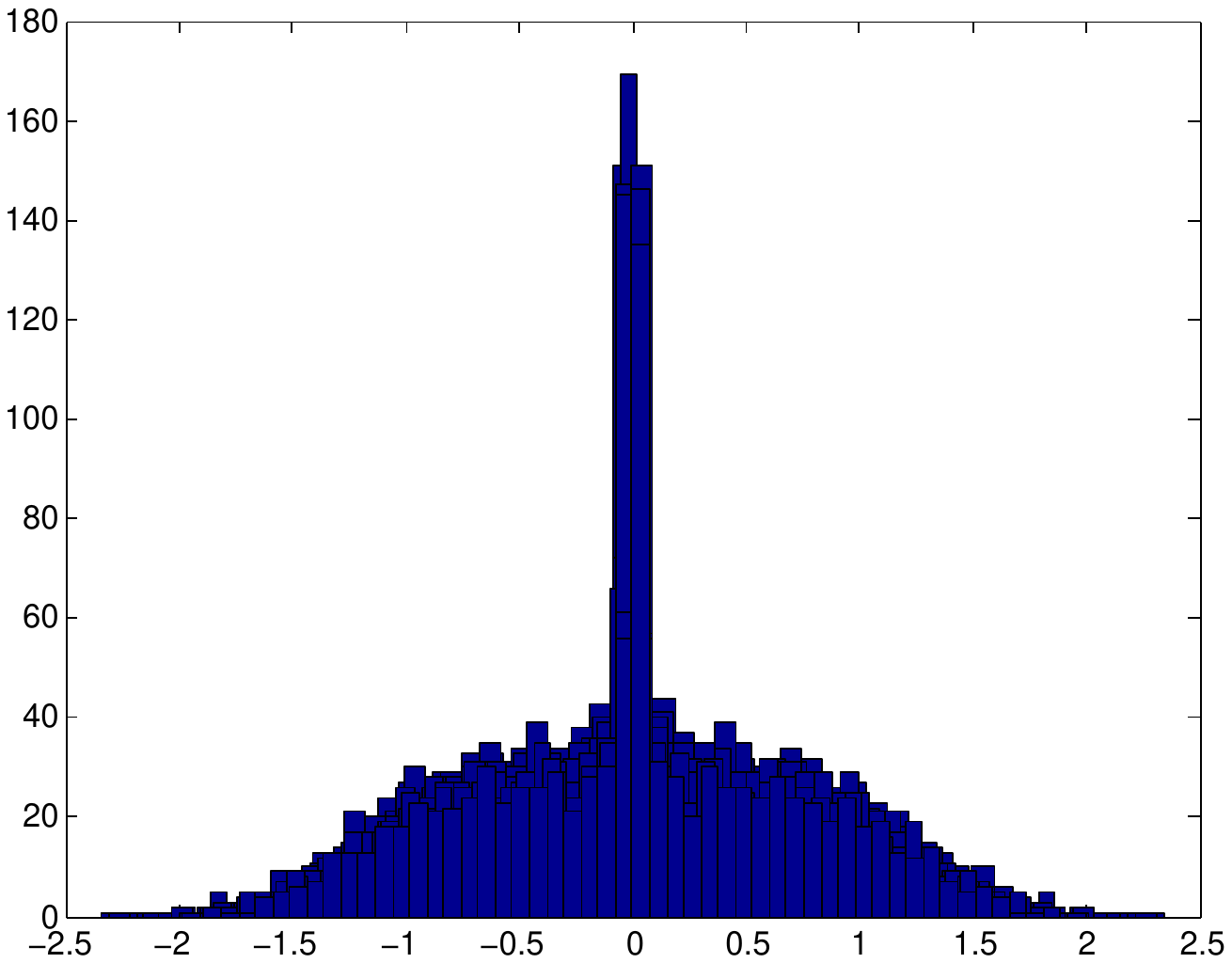}
\caption{\footnotesize  Histogram plots of empirical distribution of
triangular matrices ($n=1000$) with entries $N(0,1)$ of (i)
Symmetric Circulant (left)  (ii) Hankel (right) }
\label{fig:2}
\end{figure}
\normalsize
\vskip50pt
\section{Appendix}\label{section:appendix}

\subsection{Contribution of different Catalan Words for triangular Wigner Matrix}

It appears difficult, even for triangular Wigner matrix, to determine 
$p_u(w)$ for each Catalan word $w$. However, $p_u(w)$ can be determined for some special classes of words. 
As we have already seen, 
unlike what happens for a full Wigner matrix, $p_u(w)$ is not same for all Catalan words. Here we record some observations about contributions of different Catalan words for triangular Wigner matrices. This  might be useful in obtaining more information about the individual $p_u(w)$'s. We start with a simple lemma.

\begin{lemma}\label{lemma:aabbcat}
Let $w$ be the Catalan word of length $2k$, $w=a_1a_1a_2a_2...a_ka_k$. Then $p_u(w)=\frac{1}{k+1}.$
\end{lemma}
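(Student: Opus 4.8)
The plan is to compute $p_u(w)$ for the word $w = a_1a_1a_2a_2\cdots a_ka_k$ directly from the integral representation~\eqref{new2}, using the multiplicative structure of $Q_w$ established in the preceding lemmas. Since $w$ is Catalan of a particularly simple form, I would first identify its generating vertices and the resulting chain of constraints, then either evaluate the iterated integral by a clean induction or recognize the answer as a known Catalan-type number.

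**First I would** observe that $w = a_1a_1\,(a_2a_2\cdots a_ka_k)$ is of the form $aw'$ with a double letter at the front, so in the notation of the $Q_w$ recursion it decomposes as a product. More precisely, writing $w_j = a_ja_j$ for the elementary block, the word is the concatenation $w = w_1 w_2 \cdots w_k$ of $k$ Catalan words each of length $2$. By part (i) of the lemma on $Q_w$ (product rule for concatenated Catalan words), this gives
\begin{equation*}
Q_w(x) = \prod_{j=1}^k Q_{a_ja_j}(x) = (1-x)^k,
\end{equation*}
since each single block satisfies $Q_{aa}(x) = 1-x$ as computed in the examples. Therefore
\begin{equation*}
p_u(w) = \int_0^1 Q_w(x)\,dx = \int_0^1 (1-x)^k\,dx = \frac{1}{k+1},
\end{equation*}
which is exactly the claimed value. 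This is the cleanest route: the product formula for $Q_w$ collapses the multi-dimensional integral in~\eqref{new2} into a single one-dimensional integral of $(1-x)^k$.

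**Alternatively**, one can bypass the $Q_w$ machinery and read the answer off the recursion for $G_{2n}$. Because $G_{2n}(x) = \sum_{|w|=n} Q_w(x)$ and the explicit formula $G_{2n}(x) = \frac{(1-x)(n+1-x)^{n-1}}{n!}$ is available from Lemma~\ref{lem:cat:rec}(ii), one could in principle try to isolate the single word $a_1a_1\cdots a_ka_k$ from the sum; but that is indirect since $G_{2n}$ aggregates all Catalan words. The product-rule argument above is more self-contained and is the approach I expect to use in the write-up.

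**The main obstacle** is essentially notational rather than conceptual: one must verify that the decomposition $w = w_1 w_2 \cdots w_k$ into adjacent double-letter blocks is a genuine factorization into Catalan words so that part (i) of the $Q_w$ lemma applies iteratively. This is immediate here because each $a_ja_j$ is a Catalan word and deleting it leaves the remaining Catalan word, so the product rule can be applied $k-1$ times. Once that is granted, the computation $\int_0^1(1-x)^k\,dx = 1/(k+1)$ is routine and completes the proof.
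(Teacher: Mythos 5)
Your proof is correct, but it takes a different route from the one in the paper. The paper's proof in the appendix is self-contained: it first shows directly from the Wigner link function that every $\pi\in\Pi^*(w)$ for $w=a_1a_1\cdots a_ka_k$ satisfies $\pi(0)=\pi(2)=\cdots=\pi(2k)$, so that the triangularity constraints collapse to $v_0+v_j\leq 1$ for $j=1,\dots,k$, and then evaluates $\int_{[0,1]^{k+1}}I(v_0+v_1\leq 1,\dots,v_0+v_k\leq 1)\,dv=\int_0^1(1-v_0)^k\,dv_0=\tfrac{1}{k+1}$ by hand. You instead invoke the product rule $Q_{w_1w_2}=Q_{w_1}Q_{w_2}$ from Section 3, iterate it over the $k$ blocks $a_ja_j$ to get $Q_w(x)=(1-x)^k$, and integrate. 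Both arguments ultimately compute the same one-dimensional integral; yours is shorter but leans on the $Q_w$ machinery (the product lemma, which itself rests on the structural facts about $\phi$ and generating vertices), whereas the paper's version re-derives the constraint structure from scratch, which also makes explicit the geometric picture (all even-indexed vertices coincide) that is reused in the surrounding appendix lemmas. One small imprecision in your write-up: the decomposition you want is into the concatenation of the Catalan words $(a_1a_1)$ and $(a_2a_2\cdots a_ka_k)$, i.e.\ the case $w=w_1w_2$ of the product lemma applied $k-1$ times; it is not of the form $aw_1a$, so the phrase ``of the form $aw'$ with a double letter at the front'' should be read as concatenation, not as the nested case of the recursion. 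With that reading, your argument goes through.
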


\begin{proof}
Clearly, $\pi \in \Pi^*(w)$ implies $$ (\min(\pi(0),\pi(1)), \max(\pi(0),\pi(1)))=(\min(\pi(1),\pi(2)), \max(\pi(1),\pi(2)))$$
and hence $\pi(0)=\pi(2)$. Arguing similarly, it follows that,

$$\Pi^*(w)=\{\pi: \pi(0)=\pi(2)=\pi(4)=\ldots =\pi(2k)\} .$$

Hence,

$$|\Pi_1^*(w)|=\#\{\pi: \pi (0)+\pi(1)\leq n,\pi (0)+\pi (2)\leq n,\ldots,  ,\pi(0)+\pi(k)\leq n \}. $$

Now 
\begin{eqnarray*}
p_u(w)&=&\lim_{n \rightarrow \infty}\frac{|\Pi_1^*(w)|}{n^{1+k}}\\
&=& \idotsint\limits_{[0,1]^{k+1}}I(v_0+v_1\leq 1, v_0+v_2\leq 2, v_0+v_3\leq 1,...,v_0+v_k \leq 1)dv_0dv_1...dv_k\\
&=& \int_0^1\left[\int_0^{1-v_0}\cdots \int_0^{1-v_0}dv_1\cdots dv_k \right]dv_0\\
&=& \int_0^1(1-v_0)^kdv_0\\
&=& \frac{1}{k+1}
\end{eqnarray*}
which completes the proof.
\end{proof}

Now we show that words in some classes contribute more than words in some other classes.
Before that we need the following two lemmas.

\begin{lemma}
\label{circuit1a}
Let $w$ be a Catalan word of length $2k$. Let $\pi$ be a  function $\pi: \{0,1,...,k\}\rightarrow \{1,2,...,n\}$ such that $w[i]=w[j] \Rightarrow (\pi(i-1),\pi(i))=(\pi(j),\pi(j-1))$. Then $\pi(0)=\pi(2k)$ and hence $\pi \in \Pi^*(w)$.
\end{lemma}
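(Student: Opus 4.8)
The plan is to induct on $k$, using the recursive ``peak'' structure of Catalan words. First I would record the elementary fact that every non-empty Catalan word contains an adjacent repeated pair: since $w$ reduces to the empty word by successively deleting adjacent double letters, the very first deletion already forces the existence of an index $\ell$ with $w[\ell]=w[\ell+1]$. This peak is the hook for the induction. (I read the domain of $\pi$ as $\{0,1,\ldots,2k\}$, the statement being identical to the earlier Lemma~\ref{circuit1}(i).)

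The base case $k=1$ is the word $aa$: the hypothesis $(\pi(0),\pi(1))=(\pi(2),\pi(1))$ immediately yields $\pi(0)=\pi(2)$. For the inductive step I would fix a peak $\ell$ with $w[\ell]=w[\ell+1]$ and apply the hypothesis to this matched pair, obtaining $(\pi(\ell-1),\pi(\ell))=(\pi(\ell+1),\pi(\ell))$ and hence $\pi(\ell-1)=\pi(\ell+1)$. Thus the path makes a back-and-forth excursion $\pi(\ell-1)\to\pi(\ell)\to\pi(\ell-1)$ at the peak, which I can splice out.

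Deleting the positions $\ell,\ell+1$ from $w$ produces a Catalan word $w'$ of length $2(k-1)$, and I would define the reduced path by $\pi'(m)=\pi(m)$ for $m\le \ell-1$ and $\pi'(m)=\pi(m+2)$ for $m\ge \ell$. Because $\pi(\ell+1)=\pi(\ell-1)$, the new edge at the splice point is $(\pi(\ell-1),\pi(\ell+2))$, which is literally the old edge at position $\ell+2$; consequently every surviving letter of $w'$ labels exactly the same \emph{ordered} edge in $\pi'$ as it did in $\pi$, so $\pi'$ inherits the reverse-matching hypothesis for $w'$. The induction hypothesis then gives $\pi'(0)=\pi'(2(k-1))$, which is precisely $\pi(0)=\pi(2k)$, the two endpoints being untouched by the splice in every case (including a peak at the extreme left $\ell=1$ or extreme right $\ell=2k-1$, where one checks the identification of endpoints directly). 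Finally, since $(\pi(i-1),\pi(i))=(\pi(j),\pi(j-1))$ forces $\{\pi(i-1),\pi(i)\}=\{\pi(j-1),\pi(j)\}$ as unordered pairs, the hypothesis yields the Wigner match $L_W(\pi(i-1),\pi(i))=L_W(\pi(j-1),\pi(j))$; combined with $\pi(0)=\pi(2k)$ this places $\pi$ in $\Pi^*(w)$.

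The routine-but-delicate part is the bookkeeping of the splice: checking that the reindexed $\pi'$ is genuinely a path whose edge labels agree with $w'$, that the reverse-matching relations among the remaining letters survive the reindexing intact, and that the endpoint identity comes out correctly at the boundary cases. The only conceptual point to verify is that removing the peak pair does not disturb the ordered-edge structure of any other letter, and once the non-crossing (peak) nature of Catalan words is invoked this is transparent.
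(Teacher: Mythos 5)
Your proof is correct, and it follows exactly the route the paper merely asserts: the paper's ``proof'' of this lemma is the single sentence that it is easy by induction, with all details omitted. Your induction on the peak (an adjacent double letter, which every non-empty Catalan word must contain), the splice $\pi(\ell-1)=\pi(\ell+1)$, and the check that the reduced path $\pi'$ inherits the reverse-matching hypothesis for the reduced Catalan word $w'$ is a sound and complete filling-in of those omitted details, including the boundary cases $\ell=1$ and $\ell=2k-1$.
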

\begin{proof}
The proof is easy by induction and we omit the details.
\end{proof}



\begin{lemma}
\label{max1}
Let $w$ be a Catalan word of length $2k$ such that $w=aw_1aw_2$, where $w_1$ and $w_2$ are Catalan words. Let $w'$ denote the Catalan word $aaw_1w_2$. Then $p_u(w)\leq p_u(w').$
\end{lemma}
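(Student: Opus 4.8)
The plan is to reduce everything to the polynomials $Q_w$ and then exploit their monotonicity through a correlation (Chebyshev-type) inequality. Write $|w_1|=2k_1$ and $|w_2|=2k_2$, so $k=k_1+k_2+1$. First I would record the two factorizations of the relevant $Q$-polynomials. Since $w=(aw_1a)w_2$ with both factors Catalan, the multiplicative identity for $Q_w$ (the lemma preceding Lemma~\ref{lem:cat:rec}) gives $Q_w(x)=Q_{aw_1a}(x)Q_{w_2}(x)$, and the integral identity gives $Q_{aw_1a}(x)=\int_0^{1-x}Q_{w_1}(y)\,dy$; hence $Q_w(x)=\big(\int_0^{1-x}Q_{w_1}(y)\,dy\big)Q_{w_2}(x)$. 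Similarly $w'=(aa)w_1w_2$ is a concatenation of three Catalan words, so applying the multiplicative identity twice gives $Q_{w'}(x)=Q_{aa}(x)Q_{w_1}(x)Q_{w_2}(x)=(1-x)Q_{w_1}(x)Q_{w_2}(x)$. Integrating over $[0,1]$ then expresses both $p_u(w)$ and $p_u(w')$ through $Q_{w_1}$ and $Q_{w_2}$.

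Second, I would prove by structural induction that every $Q_w$ is nonnegative and nonincreasing on $[0,1]$. The base cases $Q_{\emptyset}\equiv 1$ and $Q_{aa}(x)=1-x$ are clear; the multiplicative identity preserves the property ``nonnegative and nonincreasing'', and the operation $Q\mapsto\int_0^{1-\cdot}Q(y)\,dy$ also preserves it, the integrand being nonnegative and the upper limit $1-x$ decreasing in $x$. Since every nonempty Catalan word decomposes as $aw_1aw_2$ with $w_1,w_2$ strictly shorter Catalan words, the induction closes.

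Third --- the crux --- I would rewrite both quantities as integrals over the triangle $T=\{(x,y):x,y\ge 0,\ x+y\le 1\}$. Fubini gives $p_u(w)=\iint_T Q_{w_1}(y)Q_{w_2}(x)\,dx\,dy$, and writing $1-x=\int_0^{1-x}dy$ gives $p_u(w')=\iint_T Q_{w_1}(x)Q_{w_2}(x)\,dx\,dy$. Since $T$ is symmetric under $x\leftrightarrow y$, averaging the expression $p_u(w')-p_u(w)=\iint_T\big[Q_{w_1}(x)-Q_{w_1}(y)\big]Q_{w_2}(x)\,dx\,dy$ with its image under the swap yields the symmetric form
\[
p_u(w')-p_u(w)=\tfrac12\iint_T\big[Q_{w_1}(x)-Q_{w_1}(y)\big]\big[Q_{w_2}(x)-Q_{w_2}(y)\big]\,dx\,dy.
\]
Because $Q_{w_1}$ and $Q_{w_2}$ are both nonincreasing, the two bracketed factors always share the same sign, so the integrand is pointwise nonnegative and the inequality $p_u(w)\le p_u(w')$ follows.

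I expect the third step to be the genuine obstacle. A naive pointwise comparison of the integrands of $p_u(w)$ and $p_u(w')$ fails: the difference $D(x)=(1-x)Q_{w_1}(x)-\int_0^{1-x}Q_{w_1}(y)\,dy$ changes sign on $[0,1]$, even though a Fubini computation shows $\int_0^1 D(x)\,dx=0$. The symmetrization over $T$ is precisely what converts the problem into a manifestly signed correlation integral, and it is valid only because of the monotonicity secured in the second step; identifying this reduction, rather than any computation, is the main point.
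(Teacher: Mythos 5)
Your proof is correct, and it takes a genuinely different route from the paper's. The paper works directly with volumes in $[0,1]^{k+1}$: it realizes $p_u(w)$ and $p_u(w')$ as the volumes of the regions $B_w$ and $B_{w'}$ cut out by the inequalities $v_{\phi(i-1)}+v_{\phi(i)}\le 1$, splits each region according to whether $v_0\le v_1$ or $v_1\le v_0$, establishes the set containments $B_{w'}^1\supseteq B_w^1$ and $B_w^2\supseteq B_{w'}^2$, and then compares the two difference sets by interchanging $v_0$ and $v_1$, using that the $w_1$-block of inequalities involves only $v_1$ and the $w_2$-block only $v_0$. You instead integrate out all interior variables first, reducing everything to the one-variable polynomials $Q_{w_1}$ and $Q_{w_2}$ via the product and $\int_0^{1-x}$ recursions of Section 3, and then prove the inequality as a Chebyshev correlation inequality over the swap-invariant triangle $T$. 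The one ingredient you add that is nowhere in the paper is the structural-induction proof that every $Q_w$ is nonnegative and nonincreasing; this is exactly the right abstraction of what the paper's containments are secretly using (``$v_0\le v_1$ together with the $w_1$-inequalities in $v_1$ implies the $w_1$-inequalities in $v_0$'' is monotonicity in disguise). Your symmetrized identity
\begin{equation*}
p_u(w')-p_u(w)=\tfrac12\iint_T\bigl[Q_{w_1}(x)-Q_{w_1}(y)\bigr]\bigl[Q_{w_2}(x)-Q_{w_2}(y)\bigr]\,dx\,dy
\end{equation*}
checks against Table 1: it gives exactly $1/24$ for the pair $abbacc$, $aabbcc$, matching $1/4-5/24$. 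What the paper's version buys is independence from the $Q_w$ machinery and a template that transfers essentially verbatim to the companion lemma comparing $abbaw_1w_2$ with $abw_1baw_2$; what yours buys is a shorter, two-dimensional argument whose only nontrivial input is monotonicity, together with a clean closed form for the deficit $p_u(w')-p_u(w)$. Your side remark that a pointwise comparison of $(1-x)Q_{w_1}(x)$ with $\int_0^{1-x}Q_{w_1}(y)\,dy$ fails is also accurate, and correctly identifies why some form of symmetrization is unavoidable in either approach.
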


\begin{proof}
Let us denote, for any set $U \in \mathbb{R}^{k+1}$, by $Vol(U)$, the Lebesgue measure of the set $U \bigcap [0,1]^{k+1}$. From what we have already proved, it follows that for every Catalan word $w_0$ of length $2k$, $p_u(w_0)=Vol (U_0)$ where $U_0$ is the region in $\mathbb{R}^{k+1}$ given by the set of all $v$'s determined by the set of inequalities 
\begin{equation}
N=\{v_{\phi(i-1)}+v_{\phi(i)}\leq 1, i \in S-\{0\}\}.
\end{equation}

Now let $w=aw_1aw_2$ and let $w_1$ and $w_2$ be Catalan words of lengths $2k_1$ and $2k_2$. Now we partition the set $N$ 
into three classes.

\begin{enumerate}
\item

 $N_1=\{v_0+v_1 \leq 1\}$: This is the  first inequality since $\phi(0)=0, \phi(1)=1$ and $1\in S$.

\item

 $A_{w_1}(v_1)$: this is the set of all inequalities  $\{v_{\phi(i-1)}+v_{\phi(i)}\leq 1: 2\leq i \leq 2k_1+1, i \in S-\{0\} \}$; i.e. the inequalities corresponding to the generating vertices in $w_1$. By Lemma \ref{circuit1} it follows that these inequalities do not involve the variable $v_0$. Also, the inequalities only depend on $w_1$ and $v_1$.

\item

$A_{w_2}(v_0)$: this is the set of all the inequalities  $\{v_{\phi(i-1)}+v_{\phi(i)}\leq 1: 2k_1+3\leq i \leq 2k, i \in S-\{0\} \}$; i.e. the class of inequalities corresponding to the generating vertices in $w_2$.  From Lemma \ref{circuit1a}, it follows that $\pi(2k+2)=\pi(0)$ for every $\pi \in \Pi^*(w)$ and hence $\phi(2k+2)=0$. Now Lemma \ref{circuit1} implies that all the variables occurring in these inequalities are independent of the variables that occurred previously, apart from $v_0$. Also, the inequalities only depend on $w_1$ and $v_0$.

\end{enumerate}

It follows that $p_u(w)=Vol(B_w)$ where

$$B_w=N_1 \cup A_{w_1}(v_1)\cup A_{w_2}(v_0).$$
Similarly, for the word $w'=aaw_1w_2$ the three classes of inequalities are:

\begin{enumerate}

\item

 $N_1$: 

\item

 $A_{w_1}(v_0)$: in this case $\phi(2)=0$. By renaming the variables other than $v_0$, if necessary,
the class of inequalities corresponding to the generating vertices in $w_1$ in this case is same as that in the previous case with $v_0$ replaced by $v_1$. By Lemma \ref{circuit1} it follows that these inequalities do not involve the variable $v_1$.
%
\item

$A_{w_2}(v_0)$:  the class of inequalities corresponding to the generating vertices in $w_2$ in this case is same as that in the previous case after renaming the variables other than $v_0$. Lemma \ref{circuit1} implies that all the variables occurring in these inequalities are independent of the variables occurred previously, apart from $v_0$.

\end{enumerate}

It follows that $p_u(w)=Vol(B_{w'})$ where

$$B_{w'}=N_1\cup A_{w_1}(v_0)\cup A_{w_2}(v_0).$$


Now
\begin{eqnarray*}Vol(B_w)&=&Vol(B_w^1)+ Vol(B_w^2) ~~\text{and} \\
Vol(B_{w'})&=&Vol(B_{w'}^1)+ Vol(B_{w'}^2)
\end{eqnarray*}
where

\begin{eqnarray*}
B_w^1&=&N_1\cup A_{w_1}(v_1)\cup A_{w_2}(v_0)\cup \{v_0\leq v_1\},\\
B_w^2&=&N_1\cup A_{w_1}(v_1)\cup A_{w_2}(v_0)\cup \{v_1\leq v_0 \}
\end{eqnarray*}

and
\begin{eqnarray*}
B_{w'}^1&=&N_1\cup A_{w_1}(v_0)\cup  A_{w_2}(v_0)\cup  \{v_0\leq v_1\},\\
B_{w'}^2&=&N_1 \cup  A_{w_1}(v_0)\cup  A_{w_2}(v_0)\cup \{v_1\leq v_0 \}.
\end{eqnarray*}
It is easy to see that $B_{w'}^1\supseteq B_w^1$ and 
$B_{w}^2\supseteq B_{w'}^2.$

Now,
\begin{eqnarray*}
B_{w'}^1- B_w^1&=&N_1\cup A_{w_1}(v_0)\cup A_{w_2}(v_0)\cup \{ v_0\leq v_1\}\cup (A_{w_1}(v_1))^\prime; \\
B_{w}^2- B_{w'}^2&=&n_1 \cup A_{w_1}(v_1)\cup A_{w_2}(v_0)\cup \{ v_1\leq v_0\}\cup (A_{w_1}(v_0)^\prime. 
\end{eqnarray*}
Now by interchange of variables $v_0$ and $v_1$, it follows that $Vol(B_{w'}^1- B_w^1)=Vol(C_{w,w'})$ where

$$C_{w,w'}= N_1 \cup A_{w_1}(v_1)\cup A_{w_2}(v_1)\cup \{v_1\leq v_0\}\cup (A_{w_1}(v_0))^\prime.$$

Again, $v_1 \leq v_0$ and $A_{w_2}(v_0)$ together imply $A_{w_2}(v_1)\leq 1$ and hence $C_{w,w'}\supseteq B_{w}^2- B_{w'}^2.$ It follows that,
\begin{eqnarray*}
Vol (C_{w,w'})&\geq & Vol (B_{w}^2- B_{w'}^2)\\
\Rightarrow Vol(B_{w'}^1- B_w^1) &\geq & Vol (B_{w}^2- B_{w'}^2)\\
\Rightarrow Vol(B_{w'}^1)+Vol(B_{w'}^1) &\geq & Vol (B_{w}^1)+Vol B_{w}^2)
\end{eqnarray*}

and this completes the proof.

\end{proof}

The next Lemma is similar to the previous one.

\begin{lemma}
Let $w=abbaw_1w_2$ and $w'=abw_1baw_2$ be two Catalan words where $w_1$ and $w_2$ are Catalan words. Then $p_u(w)\geq p_u(w').$
\end{lemma}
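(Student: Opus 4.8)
The plan is to reduce both $p_u(w)$ and $p_u(w')$ to a common integral over the three ``boundary'' variables $v_0,v_1,v_2$ attached to the letters $a$ and $b$, and then exploit a symmetry together with the monotonicity of the polynomials $Q_{w_1},Q_{w_2}$. Write $|w_1|=2k_1$ and $|w_2|=2k_2$, so $k=k_1+k_2+2$. First I would trace the vertices of each word. For $w=abbaw_1w_2$ the sub-blocks $abba$, $w_1$, $w_2$ are each Catalan and are entered and left at the vertex $\pi(0)=v_0$; using~(\ref{new2}) and Lemma~\ref{circuit1a} the inequalities split into $\{v_0+v_1\le 1\}$, $\{v_1+v_2\le 1\}$, the self-contained block of inequalities for $w_1$ based at $v_0$, and that for $w_2$ based at $v_0$. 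For $w'=abw_1baw_2$ the only change is that $w_1$ is now nested between the two $b$'s, hence entered and left at $\pi(2)=v_2$, while $w_2$ is still entered at $v_0$. Integrating out the fresh variables of each block (which, being self-contained, contribute exactly the factor $Q_{w_j}(\cdot)$ by the recursion defining $Q_w$) I would obtain
\begin{equation*}
p_u(w)=\int_R Q_{w_1}(v_0)Q_{w_2}(v_0)\,dv_0dv_1dv_2,\qquad
p_u(w')=\int_R Q_{w_1}(v_2)Q_{w_2}(v_0)\,dv_0dv_1dv_2,
\end{equation*}
where $R=\{(v_0,v_1,v_2)\in[0,1]^3:\ v_0+v_1\le 1,\ v_1+v_2\le 1\}$.

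The second ingredient is that $Q_w$ is nonnegative and nonincreasing on $[0,1]$ for every Catalan word $w$. I would prove this by induction on $|w|$ using the two recursions already established: $Q_{w_1w_2}=Q_{w_1}Q_{w_2}$ preserves the property ``nonnegative and nonincreasing'', and $Q_{aw_1a}(x)=\int_0^{1-x}Q_{w_1}(y)\,dy$ is nonincreasing because its upper limit $1-x$ decreases while the integrand stays nonnegative. Since every Catalan word is either a concatenation of two shorter Catalan words or of the form $aw_1a$ with $w_1$ Catalan, the induction closes.

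The final step is a symmetrization. Set $D=p_u(w)-p_u(w')$. The region $R$ and Lebesgue measure are invariant under the swap $v_0\leftrightarrow v_2$ (which merely interchanges the two constraints), while $v_1$ is untouched; applying this change of variables to the formula for $D$ and averaging the two resulting expressions yields
\begin{equation*}
2D=\int_R \bigl[Q_{w_1}(v_0)-Q_{w_1}(v_2)\bigr]\bigl[Q_{w_2}(v_0)-Q_{w_2}(v_2)\bigr]\,dv_0dv_1dv_2.
\end{equation*}
Since both $Q_{w_1}$ and $Q_{w_2}$ are nonincreasing, the two bracketed factors always carry the same sign, so the integrand is nonnegative; hence $D\ge 0$, i.e.\ $p_u(w)\ge p_u(w')$, as claimed. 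This is exactly the integral form of the correlation (Chebyshev) inequality for monotone functions, and it also shows equality holds precisely when $Q_{w_1}$ or $Q_{w_2}$ is constant, e.g.\ when $w_2$ is empty.

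I expect the main obstacle to be bookkeeping rather than ideas: carefully checking that the $w_1$- and $w_2$-blocks are genuinely self-contained, that is, that their fresh variables occur in no inequality other than their own and interact with the remaining variables only through the single entry vertex ($v_0$ or $v_2$), so that integrating them out legitimately produces the factors $Q_{w_1}(\cdot)$ and $Q_{w_2}(\cdot)$. This is where Lemma~\ref{circuit1a} and the first-return structure already used in Lemma~\ref{max1} do the real work; once the two integral formulas above are in place, the monotonicity-plus-symmetrization argument is short.
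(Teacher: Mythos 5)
Your proof is correct, and although it rests on the same two key ingredients as the paper's --- the reflection $v_0\leftrightarrow v_2$ and the fact that the $w_1$-block constraints relax as their base vertex decreases --- it packages them differently enough to be worth comparing. The paper never integrates out the inner blocks: it keeps the full $(k+1)$-dimensional regions $B_w$ and $B_{w'}$, splits each according to whether $v_0\le v_2$ or $v_2\le v_0$, derives the containments $B_w^1\supseteq B_{w'}^1$ and $B_{w'}^2\supseteq B_w^2$ from the pointwise implication that $v_0\le v_2$ together with $A_{w_1}(v_2)$ forces $A_{w_1}(v_0)$, and then compares the two set differences via the swap of $v_0$ and $v_2$. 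You instead integrate out the fresh variables of $w_1$ and $w_2$ first, reducing both quantities to weighted integrals of $Q_{w_1}$ and $Q_{w_2}$ over the three-dimensional region $R=\{v_0+v_1\le 1,\ v_1+v_2\le 1\}$, and conclude by a Chebyshev-type correlation inequality for the two nonincreasing functions $Q_{w_1},Q_{w_2}$ under the $v_0\leftrightarrow v_2$-invariant measure on $R$; your two displayed integral formulas are correct, with $w_1$ based at $v_0$ in $w$ and at $v_2$ in $w'$ exactly as Lemma~\ref{circuit1} guarantees. This route costs you one extra lemma (monotonicity of $Q_w$, which your induction via $Q_{w_1w_2}=Q_{w_1}Q_{w_2}$ and $Q_{aw_1a}(x)=\int_0^{1-x}Q_{w_1}(y)\,dy$ does establish, and which is precisely the integrated form of the paper's pointwise implication), but it buys a cleaner one-line inequality, reuses the $Q_w$ machinery already built in Section 3, and additionally identifies the equality cases ($w_1$ or $w_2$ empty), which the paper's set-theoretic argument does not make visible.
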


\begin{proof}
We use the same notations. As in the previous Lemma, 
now $p_u(w)=Vol(B_w)$ and $p_u(w')=Vol(B_{w'})$ where

\begin{eqnarray*}
B_w&=&N_1\cup \{ v_1+v_2\leq 1\}\cup A_{w_1}(v_0)\cup A_{w_2}(v_0) ~\text{and} \\
B_{w'}&=&N_1\cup\{ v_1+v_2\}\cup A_{w_1}(v_2)\cup A_{w_2}(v_0).
\end{eqnarray*}

Now
\begin{eqnarray*}
Vol(B_w)&=&Vol(B_w^1)+ Vol(B_w^2) ~~\text{and}\\
Vol(B_{w'})&=&Vol(B_{w'}^1)+ Vol(B_{w'}^2)
\end{eqnarray*}

where
\begin{eqnarray*}
B_w^1 &=&N_1\cup \{ v_1+v_2\leq 1\}\cup A_{w_1}(v_0)\cup A_{w_2}(v_0)\cup \{ v_0\leq v_2\},\\
B_w^2&=&N_1\cup \{ v_1+v_2\leq 1\}\cup A_{w_1}(v_0)\cup A_{w_2}(v_0)\cup \{ v_2\leq v_0\},\
\end{eqnarray*}

and
\begin{eqnarray*}
B_{w'}^1 &=&
N_1\cup \{ v_1+v_2\leq 1\}\cup A_{w_1}(v_2)\cup A_{w_2}(v_0)\cup \{ v_0\leq v_2\},\\
B_{w'}^2 &=&
N_1\cup \{ v_1+v_2\leq 1\}\cup A_{w_1}(v_2)\cup A_{w_2}(v_0)\cup \{ v_2\leq v_0\}.
\end{eqnarray*}

As $v_0\leq v_2$ and $A_{w_1}(v_2)$ together imply $A_{w_1}(v_0)$, $B_{w}^1\supseteq B_{w'}^1$. Similarly it follows that $B_{w'}^2\supseteq B_{w}^2.$

Now,
\begin{eqnarray*}
B_{w}^1- B_{w'}^1&=&N_1 \cup \{v_1+v_2\leq 1\} \cup A_{w_1}(v_0)\cup A_{w_2}(v_0) \{ v_0\leq v_2\} \cup 
(A_{w_1}(v_2))^\prime;\\
B_{w'}^2- B_{w}^2&=&
N_1 \cup \{v_1+v_2\leq 1\} \cup A_{w_1}(v_2)\cup A_{w_2}(v_0) \{ v_2\leq v_0\} \cup 
(A_{w_1}(v_0))^\prime .
\end{eqnarray*}

Once again by interchange of two variables $v_0$ and $v_2$ we see that $Vol(B_{w'}^2- B_w^2)=Vol(C_w,w')$ where

$$C_{w,w'}= N_1  \cup \{v_1+v_2\leq 1\} \cup A_{w_1}(v_0)\cup A_{w_2}(v_2) \{ v_0\leq v_2\} \cup  
(A_{w_1}(v_2)^\prime.$$

Again, $v_0 \leq v_2$ and $A_{w_2}(v_2)$ together imply $A_{w_2}(v_0)$ and hence $C{w,w'}\subseteq B_{w}^1- B_{w'}^1.$ It follows that,
\begin{eqnarray*}
Vol (C{w,w'})&\leq & Vol (B_{w}^1- B_{w'}^1)\\
\Rightarrow Vol(B_{w'}^1- B_w^1) &\geq & Vol (B_{w}^2- B_{w'}^2)\\
\end{eqnarray*}
and the proof is completed as in the previous Lemma.
\end{proof}

The next proposition gives an upper bound on $p_u(w)$'s.

\begin{proposition}
Let $w$ be a Catalan word of length $2k$. Then $p_u(w)\leq \frac{1}{k+1}.$
\end{proposition}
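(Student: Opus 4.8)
The plan is to prove the sharper statement that $a_1a_1a_2a_2\cdots a_ka_k$ is the \emph{maximal} Catalan word, i.e.\ that every Catalan word $w$ of length $2k$ can be transformed into it by a finite sequence of moves of the type appearing in Lemma~\ref{max1}, each of which does not decrease $p_u$. Since Lemma~\ref{lemma:aabbcat} gives $p_u(a_1a_1\cdots a_ka_k)=1/(k+1)$, the bound $p_u(w)\le 1/(k+1)$ then follows at once. It is convenient to decompose a Catalan word by its first-return (irreducible) factors, writing $w=B_1B_2\cdots B_r$ where each $B_i=a_iu_ia_i$ is an irreducible Catalan word and each $u_i$ is Catalan; combinatorially the $B_i$ record the children of the root of the plane tree associated with $w$, and $u_i$ the subtree hanging from the $i$-th child.

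The key structural fact I would use is the multiplicative property $Q_{w_1w_2}=Q_{w_1}Q_{w_2}$ established earlier, which gives $Q_w=\prod_{i=1}^r Q_{B_i}$ and hence $p_u(w)=\int_0^1\prod_{i=1}^rQ_{B_i}(x)\,dx$. Because this expression is symmetric in the factors $B_i$, the value of $p_u$ is unchanged under any reordering (or relabelling) of the irreducible blocks. This is exactly what allows me to apply Lemma~\ref{max1} to \emph{any} chosen block and not merely the leading one: to promote the block $B_t=a_tu_ta_t$, first move it to the front (keeping $p_u$ fixed by the symmetry just noted), then apply Lemma~\ref{max1} to the word $a_tu_ta_t R$, where $R$ is the product of the remaining blocks, to obtain $p_u(w)\le p_u(a_ta_t\,u_t\,R)$. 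In the new word the factor $B_t$ has shrunk to the single pair $a_ta_t$, while the irreducible blocks of $u_t$ have been promoted to top-level blocks; in tree language, the $t$-th child has become a leaf and its children have been lifted to become children of the root.

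To see that iterating this move terminates at $a_1a_1\cdots a_ka_k$, I would track the monotone quantity $\sum_v \mathrm{depth}(v)$, the total path length of the plane tree (equivalently, the total amount of nesting in $w$). Each promotion move strictly decreases this quantity by the number of descendants of the promoted child, which is at least one whenever some block $u_t$ is nonempty, i.e.\ whenever $w$ is not already $a_1a_1\cdots a_ka_k$; the star tree $a_1a_1\cdots a_ka_k$ is the unique minimizer. Hence after finitely many moves, none of which decreases $p_u$, we reach $a_1a_1\cdots a_ka_k$, giving $p_u(w)\le 1/(k+1)$. The main point requiring care is the localization step: verifying that the symmetry of $p_u$ in the irreducible blocks legitimately reduces a promotion at block $B_t$ to the leading-block case of Lemma~\ref{max1}, and that the promoted word is indeed the one claimed (here Lemmas~\ref{circuit1} and \ref{circuit1a}, which yield $\phi(2k_1)=0$ and pin down the vertex to which each block attaches, handle the bookkeeping). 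Everything else is routine.
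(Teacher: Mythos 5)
Your proof is correct, and it rests on the same two pillars as the paper's own argument: Lemma~\ref{lemma:aabbcat} for the extremal word $a_1a_1\cdots a_ka_k$, and Lemma~\ref{max1} as the engine that increases $p_u$ while simplifying the word. The difference lies in how the reduction to the star word is organized. The paper rotates $w$ cyclically until it takes the form $aw_1aw_2$ with $w_2$ nonempty, invoking rotation invariance of $p_u$ (asserted there without proof), and then inducts on the number of consecutive double letters at the end of the word. You instead factor $w$ into its irreducible (first-return) blocks $B_1\cdots B_r$, observe that $p_u(w)=\int_0^1\prod_{i=1}^r Q_{B_i}(x)\,dx$ is symmetric in the blocks by the multiplicativity $Q_{w_1w_2}=Q_{w_1}Q_{w_2}$ --- a fact the paper does prove --- and induct on the total path length of the associated plane tree, which strictly drops by the size of the promoted subtree at each application of Lemma~\ref{max1}. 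Your route has the advantage that every repositioning step is justified by a lemma actually established in the paper rather than by the unproved rotation-invariance claim, and the termination argument is clean and complete; the cost is the extra bookkeeping of the tree correspondence, which you handle correctly. Both arguments are valid and yield the same bound.
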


\begin{proof}
If $w=a_1a_1a_2a_2...a_ka_k$, then the result follows from Lemma \ref{lemma:aabbcat}. If not, by left rotation, we can obtain, from $w$, a word $w'$ such that $w'$ does not start with a double letter and $w'$ is not of the form $aw_1a$ either. Since $p_u(w)$ is invariant under rotation, $p_u(w)=p_u(w')$. By hypothesis, $w'$ must be of the form $w'=aw_1aw_2$ where $w_1$ and $w_2$ are Catalan words. By Lemma \ref{max1}, $p_u(w)=p_u(w')\leq p_u(aaw_1w_2)=p_u(w_1w_2aa)$ and the number of consecutive double letters at the end of $w_1w_2aa$ is strictly greater than that of $w$. The proof can now be completed by induction on the number of consecutive double letters at the end of $w$.
\end{proof}

We can see from the above results that $p_u(w)$ depends upon the Noncrossing structure of the word $w$. It is an interesting combinatorial problem to obtain a formula for $p_u(w)$ as a function of the Catalan structure of $w$ or to investigate what Catalan structures gives rise to the same $p_u(w)$'s.

\end{document}